\documentclass[11pt]{article}
\usepackage{amsmath, amsthm, amssymb, cancel, cite, geometry, graphicx, overpic, url, comment, todonotes}

\geometry{
 total={160mm,240mm},
 }

\renewcommand{\theenumi}{(\roman{enumi})} 

\newtheorem{theorem}{Theorem}[section]

\newtheorem{corollary}[theorem]{Corollary}
\newtheorem{lemma}[theorem]{Lemma}

\DeclareMathOperator{\R}{\mathbb{R}}
\DeclareMathOperator{\Sym}{\mathbb{S}}
\DeclareMathOperator{\Z}{\mathbb{Z}}

\DeclareMathOperator{\interior}{int}
\DeclareMathOperator{\lspan}{span}
\DeclareMathOperator{\relint}{relint}
\DeclareMathOperator{\bd}{bd}

\newcommand{\iprod}[2]{\left\langle {#1}, {#2} \right\rangle}

\title{Everything is possible:\\
constructing spectrahedra with prescribed facial dimensions}
\author{Vera Roshchina\thanks{School of Mathematics and Statistics, UNSW Sydney, Kensington Campus, NSW 2052, Australia (e-mail: v.roshchina@unsw.edu.au). This author is grateful to the Australian Research Council for continuing support.} \and Levent Tun\c{c}el\thanks{Department of Combinatorics and Optimization, Faculty of Mathematics, University
of Waterloo, Waterloo, Ontario N2L 3G1, Canada (e-mail: levent.tuncel@uwaterloo.ca). Research of this
author was supported in part by Discovery Grants from the Natural Sciences and Engineering Research
Council (NSERC) of Canada.}}

\date{December 8, 2023, revised: August 22, 2024}
\begin{document}

\maketitle

\begin{abstract}
    Given any finite set of nonnegative integers, there exists a closed convex set whose facial dimension signature coincides with this set of integers, that is, the dimensions of its nonempty faces comprise exactly this set of integers. In this work, we show that such sets can be realised as solution sets of systems of finitely many convex quadratic inequalities, and hence are representable via second-order cone programming problems, and are, in particular, spectrahedral. It also follows that these sets are facially exposed, in contrast to earlier constructions. We obtain a lower bound on the minimum number of convex quadratic inequalities needed to represent a closed convex set with prescribed facial dimension signature, and show that our bound is tight for some special cases. Finally, we relate the question of finding efficient representations with indecomposability of integer sequences and other topics, and discuss a substantial number of open questions.

\end{abstract}

\vspace{0.5cm}

{\bf MSC:} 90C22, 90C25, 52A38



\section{Introduction}

Understanding the boundary structure of feasible regions of optimization problems is an essential ingredient in optimization theory (in determining existence and uniqueness of optimal solutions, in characterization of optimal solutions or subsets of optimal solutions, in design and analyses of efficient algorithms). In this paper, we address some of the fundamental questions about the dimensions of faces of closed convex sets in Euclidean spaces. Our treatment applies to the feasible regions of Second-Order Cone Programming problems and the feasible
regions of Semidefinite Programming problems. The latter are also called \emph{spectrahedra}. More precisely, a set $S$ in $\R^n$ is called a \emph{spectrahedron}, if there exists a positive integer $m$ and $m$-by-$m$ symmetric matrices $A_0, A_1, A_2, \ldots , A_n$ such that
\[
S=\left\{ x \in \R^n \,\, : \,\, A_1 x_1 +A_2 x_2 + \cdots + A_n x_n \succeq A_0 \right\},
\]
where for two symmetric matrices $A,B$ of the same size, $A \succeq B$ means, $(A-B)$ is positive semidefinite. We denote the space of 
$m$-by-$m$ symmetric matrices by $\Sym^m$.

It was shown in \cite{RoshYost} that for any finite set of nonnegative integers containing zero, there exists a compact convex set such that the dimensions of the faces of this set form exactly this prescribed set of integers. The particular way in which the compact sets satisfying this property were constructed in \cite{RoshYost} by means of Minkowski sums of Euclidean balls of different dimensions, results in sets that are not facially exposed. In this work, we propose a different construction that is not based on Minkowski sums and our construction possesses favourable properties, including facial exposedness and representation by convex quadratic inequalities. We also discuss the complexity of such representations, and pose a number of open questions. 

Boundary structure of spectrahedra has been studied for some time (see \cite{FriedlandLoewy1976,Barvinok1995,LaurentPoljak1996,Pataki2000} and the references therein). Among the rich behaviours of the boundary structure of spectrahedra, understood so far, are the behaviours similar to general convex sets in the context of strict complementarity failures~\cite{deCarliSilvaT2019}, and
the fact that every rank (in the range of possible ranks of an extreme point of a spectrahedron described in the results of \cite{FriedlandLoewy1976, Barvinok1995,Pataki2000}) is possible for an extreme point of a spectrahedron~\cite{Scheiderer2022}.
In this paper, we describe another context (dimensions of faces) in which a spectrahedron can show as rich a behaviour as any closed convex set.

Recall that a convex subset $F$ of a convex set $C$ is a \emph{face} of $C$, denoted by $F\unlhd C$, if for any $x,y\in C$ with $(x,y)\cap F \neq \emptyset$ we have $x,y\in F$. The dimension of a convex set is the dimension of the smallest affine subspace that contains this set. Since each face is a convex set, dimensions of nonempty faces are well-defined. Since the empty set is a face of every convex set, for consistency of notation it is sometimes prescribed the dimension of $-1$. In our context, empty faces are not directly relevant; hence, we only focus on the nonempty faces and their dimensions.

For a convex set $C\subseteq \R^n$ its \emph{facial dimension signature} is the set of nonnegative integers $I$ that consists of the dimensions of faces of $C$, that is, \[
I = \{\dim F\, :\, F\unlhd C, \, F\neq \emptyset\}.
\]
A face $F$ of a convex set $C$ is \emph{exposed} if there exists a closed half-space $H$ such that the set $C$ is a subset of this half-space, and the intersection of its boundary hyperplane with $C$ is exactly $F$. A convex set is \emph{facially exposed}, if every proper face of the convex set is exposed. Our first main result is that any facial dimension signature is realisable as the solution set of a finite system of convex quadratic inequalities. More precisely, we prove the following. 

\begin{theorem}[Everything is possible]\label{thm:everything} For every nonempty finite set $I$ of nonnegative integers, there exists a closed convex set $S\subseteq \R^d$ such that $d=\max I$, and $I$ is the facial dimension signature of $S$. 
More specifically, it is possible to construct this set $S$ in such a way that 
\begin{enumerate}
    \item $S$ is facially exposed;
    \item $S$ can be represented as the solution set of a system of $|I|-1$ convex quadratic inequalities;
    \item if $0\in I$, then in addition, $S$ can be assumed to be compact. 
\end{enumerate}
\end{theorem}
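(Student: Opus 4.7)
The plan is to give an explicit construction of $S$ as the intersection of $|I|-1$ nested ball/cylinder sublevel sets and analyse its facial structure directly. Let $I = \{d_1 < d_2 < \cdots < d_k = d\}$. I would first reduce to the case $d_1 = 0$ by a product construction: if $S' \subset \R^{d-d_1}$ has signature $\{d_i - d_1 : 1 \le i \le k\}$ and admits the required representation, then $S := \R^{d_1} \times S'$ has signature $I$ (since the faces of the product are $\R^{d_1} \times F'$ for $F'$ a face of $S'$), is representable by the same $k-1$ inequalities, and inherits facial exposedness from $S'$. The product is unbounded, which is consistent with (iii) demanding compactness only when $d_1 = 0$.

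For $d_1 = 0$, I set $m_j := d - d_{k-j}$ for $j = 1, \ldots, k-1$, choose radii $0 < R_1 < R_2 < \cdots < R_{k-1}$, and define
\[
S \;=\; \Bigl\{ x \in \R^d \;:\; \sum_{i=1}^{m_j} x_i^2 \le R_j^2, \; j = 1, \ldots, k-1 \Bigr\}.
\]
This is an intersection of $k-1 = |I|-1$ convex quadratic sublevel sets, is contained in the ball $\|x\| \le R_{k-1}$ (hence compact), and each constraint has an LMI representation via Schur complement, so $S$ is in fact a spectrahedron.

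The heart of the proof is the face analysis. Grouping coordinates into blocks $y_\ell := (x_{m_{\ell-1}+1}, \ldots, x_{m_\ell})$ (with $m_0 := 0$), the constraints become $\sum_{\ell \le j} \|y_\ell\|^2 \le R_j^2$, and $S$ is invariant under the product of orthogonal groups acting block-wise. For $x^* \in S$ let $j_r$ be the largest index of a tight constraint at $x^*$ (or $j_r := 0$ if $x^*$ is interior). A second-order Taylor expansion of each $\sum_{i=1}^{m_j} x_i^2 - R_j^2$ at $x^*$ shows that $v$ satisfies $x^* \pm \epsilon v \in S$ for all small $\epsilon > 0$ if and only if $v_i = 0$ for all $i \le m_{j_r}$. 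Hence the minimal face of $S$ containing $x^*$ is $\{x^* + v \in S : v_i = 0 \text{ for } i \le m_{j_r}\}$ with dimension $d - m_{j_r} = d_{k - j_r} \in \{d_1, \ldots, d_k\}$. Each value in $I$ is realised: the linear functional $e_{m_{j-1}+1}$ exposes a face of dimension $d_{k-j}$ for each $j = 1, \ldots, k-1$, and $S$ itself has dimension $d_k$. Finally, the minimal face at any $x^*$ is itself exposed via the supporting functional $c := \sum_{j : c_j(x^*) = 0} \nabla c_j(x^*)$, whose maximisers over $S$ coincide with this minimal face (by Cauchy--Schwarz applied block-wise), so every face of $S$ is exposed.

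The main obstacle will be ruling out ``exotic'' face dimensions that might arise when several constraints are simultaneously tight. The key point is that only the \emph{largest} tight index $j_r$ controls the minimal face dimension, which is what makes $|I|-1$ inequalities suffice to produce exactly $|I|$ face dimensions. Reducing to the block-radii variables $u_\ell = \|y_\ell\|$ turns the problem of maximising a linear functional over $S$ into maximising $\sum_\ell \|c^{(\ell)}\| u_\ell$ over a polytope in $\R^{k-1}$, and a KKT analysis on this polytope isolates the role of $j_r$.
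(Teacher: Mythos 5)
Your proposal is correct, but it uses a genuinely different construction from the paper. The paper starts from the unit ball $B^n$ and intersects it with the \emph{shifted} cylinders $C_i^n=\{x: (x_{i+1}+c)^2+x_{i+2}^2+\cdots+x_n^2\le r^2\}$, one for each $i\in I\setminus\{0,n\}$; the whole argument hinges on choosing $(c,r)$ with $1+c>r>\sqrt{c^2+\sqrt2\,c+1}$ so that the boundaries of any two distinct cylinders are \emph{disjoint inside the ball}, whence every face of the intersection is an intersection of a face of $B^n$ with a face of at most one $C_i^n$, and the signature is read off from general lemmas on faces of intersections. You instead use concentric, \emph{nested} constraints $\sum_{i\le m_j}x_i^2\le R_j^2$ with $m_1<\cdots<m_{k-1}=d$ and $R_1<\cdots<R_{k-1}$: here several constraints can be simultaneously active, but the telescoping structure forces the minimal face at $x^*$ to be the slice $S\cap(x^*+\{v: v_i=0,\ i\le m_{j_r}\})$ governed solely by the largest active index $j_r$, which is exactly the observation that lets $|I|-1$ inequalities produce exactly $|I|$ face dimensions. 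Your face analysis (the $\pm\epsilon v$ expansion, which is the paper's Lemma~\ref{lem:charminF} in disguise), the realisation of each dimension by the point $R_je_{m_j}$, the exposing functional built from gradients of active constraints, and the compactness via the outermost ball all check out; the reduction of $\min I>0$ to $\min I=0$ by taking a direct sum with $\R^{\min I}$ is identical to the paper's. What each approach buys: the paper's disjointness trick localises each nontrivial face dimension to a separate patch of the boundary and is the template for its later ``grafting'' discussion, at the cost of a parameter-tuning lemma; your nested construction needs no parameter tuning, makes facial exposedness an explicit one-line Cauchy--Schwarz computation, and avoids the boundary-intersection case analysis altogether.
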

The proof of this theorem is constructive and is based on building the set $S$ as an intersection solution sets of convex quadratic inequalities. We explain the construction in Section~\ref{sec:construction} and prove Theorem~\ref{thm:everything} in Section~\ref{sec:proof}.

The explicit construction that we use to prove Theorem~\ref{thm:everything} is not always optimal, in the sense that there may exist a convex set with the same signature representable by fewer convex quadratic inequalities. We give an example of an optimal construction that only requires $\lceil\log_2 |I|\rceil$ inequalities for a `complete' signature $I = \{0,1,\dots,n\}$ and relate the constructive approach to efficiently realising other signatures with the beautiful topic of indecomposable representations of integer sequences. 

Our last main result is a lower bound on the number of convex quadratic inequalities needed to realise a given set $I$ of nonnegative integers as a facial dimension signature of a convex set.

\begin{theorem}\label{thm:lowerbound} Given a set of nonnegative integers $I$ the number of quadratic inequalities needed to represent a convex set with facial dimension signature $I$ can not be smaller than the minimum  number $k$ of integers $d_1\geq d_2\geq \cdots\geq d_k$ such that  
\begin{equation}\label{eq:intervals}
I\subseteq \{n\}\bigcup_{m=1}^k \left\{ i\in \Z_+\,:\, \sum_{j=1}^m d_j - (m-1)n\leq i\leq d_m\right\},   
\end{equation}
where $n = \max I$.
\end{theorem}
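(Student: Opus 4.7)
\noindent\emph{Proof plan.} Fix a representation $S=\bigcap_{j=1}^{k}C_j$ in which $C_j=\{x\in \R^n: q_j(x)\leq 0\}$ and each $q_j$ is a convex quadratic; we assume $n=\max I=\dim S$ (otherwise one works inside $\mathrm{aff}(S)$). The plan is to construct from this representation a nonincreasing sequence $d_1\geq d_2\geq\cdots\geq d_k$ that verifies the covering \eqref{eq:intervals}, which will force the minimum in the statement to be at most $k$. The natural choice is to let $d_j$ be the common dimension of the proper faces of $C_j$, reindexed to be nonincreasing.

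The first ingredient is a structural lemma: for any full-dimensional convex quadratic region $C=\{q\leq 0\}\subseteq \R^n$ with positive semidefinite quadratic part $Q$ and linear part $b$, every proper face of $C$ is an affine subspace of a single dimension $d(C)$. I would prove this by diagonalising $Q$ and splitting on whether $b\in\mathrm{range}(Q)$. If it does, then $C$ is congruent to an ellipsoid in $\R^{\mathrm{rank}(Q)}$ crossed with its lineality space, and its proper faces are translates of that lineality space, so $d(C)=n-\mathrm{rank}(Q)$. If it does not, a coordinate shift identifies $C$ with a paraboloid crossed with a $(n-\mathrm{rank}(Q)-1)$-dimensional lineality space, giving $d(C)=n-\mathrm{rank}(Q)-1$.

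Next, consider an arbitrary nonempty face $F\unlhd S$. Pick $x\in\relint F$, set $A=\{j:q_j(x)=0\}$, and let $F_j$ be the minimal face of $C_j$ containing $x$ for each $j\in A$; by the previous step, $F_j$ is an affine subspace of dimension $d_j$. The pivotal claim is
\[
\dim F \;=\; \dim\!\Bigl(\bigcap_{j\in A}F_j\Bigr).
\]
For the inclusion $F\subseteq F_j$ ($j\in A$): any $y\in F$ is joined to $x\in\relint F$ by a segment inside $F$ that extends slightly past $x$ while remaining in $F\subseteq C_j$, so $x$ lies in the relative interior of a segment with both endpoints in $C_j$, and the face property of $F_j$ in $C_j$ forces $y\in F_j$. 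For the reverse dimension bound: continuity of $q_j$ for $j\notin A$ ensures that points of $\bigcap_{j\in A}F_j$ sufficiently close to $x$ still lie in $\interior C_j$, so a relative neighbourhood of $x$ in $\bigcap_{j\in A}F_j$ is contained in $S$, and the same segment-extension argument applied to $F$ inside $S$ then places that neighbourhood inside $F$.

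Finally, the classical affine-subspace intersection bound gives
\[
\sum_{j\in A}d_j-(|A|-1)n \;\leq\; \dim\!\Bigl(\bigcap_{j\in A}F_j\Bigr)\;\leq\; \min_{j\in A}d_j.
\]
Choosing $m$ to be the largest position of an element of $A$ in the sorted list yields $d_m=\min_{j\in A}d_j$, and since $A\subseteq\{1,\dots,m\}$ and $d_j\leq n$ for every $j$, the difference $\sum_{j=1}^{m}d_j-(m-1)n-\bigl[\sum_{j\in A}d_j-(|A|-1)n\bigr]=\sum_{j\in [1,m]\setminus A}(d_j-n)$ is nonpositive, so $\sum_{j=1}^{m}d_j-(m-1)n\leq\sum_{j\in A}d_j-(|A|-1)n$. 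Thus $\dim F$ lies in the $m$-th interval of \eqref{eq:intervals}, while faces with empty $A$ coincide with $S$ and contribute $\max I$ to the left singleton, completing the cover. The hardest step is expected to be the identity $\dim F=\dim(\bigcap_{j\in A}F_j)$: since $S$ need not be facially exposed, the face $F$ itself may be non-exposed, and the two dimension comparisons rest on carefully combining the face property of $F$ in $S$ with those of the $F_j$ in $C_j$ and the local openness provided by the inactive constraints.
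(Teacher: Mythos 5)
Your proposal is correct and follows essentially the same route as the paper: it rests on the same structural fact that each convex quadratic region has proper faces of a single dimension (the paper's Lemma~\ref{lem:quadraticdim}), identifies a face of the intersection with the intersection of the minimal faces $F_j$ at a relative-interior point, applies the affine-subspace dimension bound, and then uses the same prefix-monotonicity computation $\sum_{j\in[1,m]\setminus A}(d_j-n)\leq 0$ to reduce to the sorted sequence. The only cosmetic difference is that you phrase the key identity as an equality of dimensions via a local neighbourhood argument around $x$, where the paper derives the set equality $F=\bigcap_j F_{\min}(x,C_j)$ directly from its minimal-face characterisation.
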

We prove this result in Section~\ref{sec:lowerbound}.
Our paper is organised as follows. In Section~\ref{sec:generalconstruction} we focus on the constructive proof of Theorem~\ref{thm:everything}. Within this section we first explain the construction that allows to build convex sets with desired facial dimension signatures in Section~\ref{sec:construction}, in Section~\ref{sec:convgeom} we recap some foundational facts from convex geometry that allow us to prove Theorem~\ref{thm:everything} in Section~\ref{sec:proof}. In  Section~\ref{sec:complexity} we study optimal representations and complexity. We begin with introducing a construction that gives an optimal representation of complete signatures in Section~\ref{sec:construction}, that only requires $\lceil\log_2 |I| \rceil$ inequalities for a `complete' signature $I = \{0,1,\dots,n\}$. In Section~\ref{sec:integersequences} we explore some possible ways of realising signatures with fewer inequalities than $|I|-1$, and relate this with indecomposable integer sequences. In Section~\ref{sec:lowerbound} we prove Theorem~\ref{thm:lowerbound}. Finally in Section~\ref{sec:conclusions} we discuss a range of open problems that emerged from this work.  

\section{General construction}\label{sec:generalconstruction}

The goal of this section is to prove Theorem~\ref{thm:everything}. We first introduce our key construction, and recall some preliminary facts from convex geometry. 

\subsection{Construction}\label{sec:construction}

The idea behind our construction lies in intersecting carefully selected convex sets to achieve the desired properties of their facial structure. To build some intuition, first note that complete facial dimension signatures are attained by simplices; i.e., for any nonnegative integer $n$, any $n$-dimensional simplex has the facial dimension signature of $\{0, 1, \ldots,n\}$. Also, any nonempty, pointed, $n$-dimensional polyhedron has the same signature. Using polyhedra that are not necessarily pointed, it is possible to obtain any set of contiguous nonnegative integers as a facial dimension signature (via taking a Minkowski sum of a suitable linear subspace and a suitable nonempty, pointed polyhedron). However, no other subset of nonnegative integers can be the facial dimension signature of a polyhedron.

Next, to continue building some more intuition, suppose that we would like to construct a closed convex set whose facial dimension signature is $\{0,2,3\}$. If we start with the Euclidean ball, we already have faces of dimensions $0$ and $3$, so it remains to graft a two-dimensional face onto this ball without changing the rest of the facial dimension signature. If we intersect this Euclidean ball with a closed half-space whose boundary plane intersects the ball through its interior, then the intersection of the boundary plane with the ball will generate a two-dimensional face (see the leftmost image in Fig.~\ref{fig:constIllustrate}).
\begin{figure}[ht]
    \centering
    \includegraphics[width=0.3\textwidth]{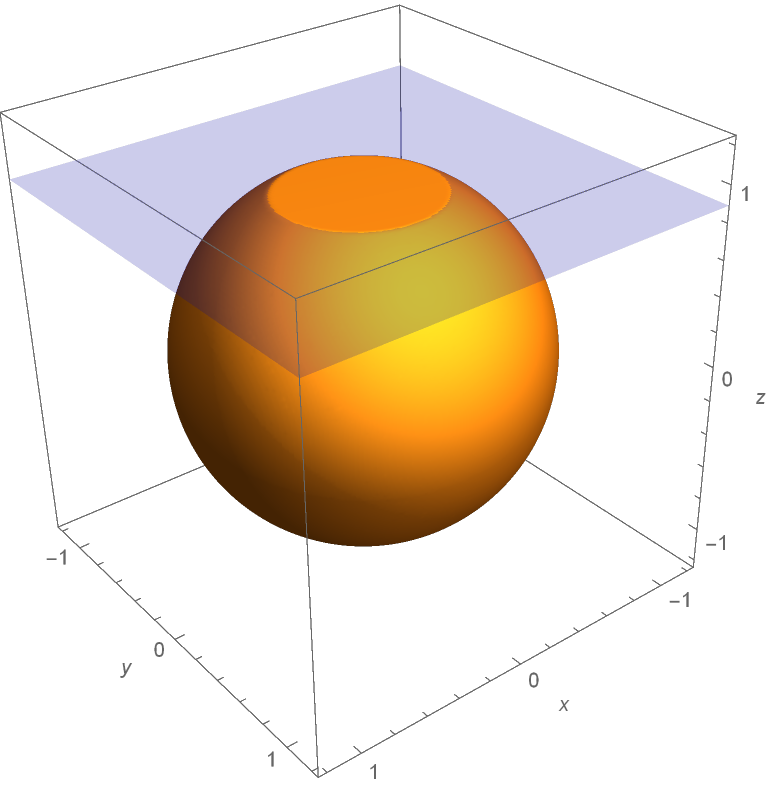}
    \quad 
    \includegraphics[width=0.3\textwidth]{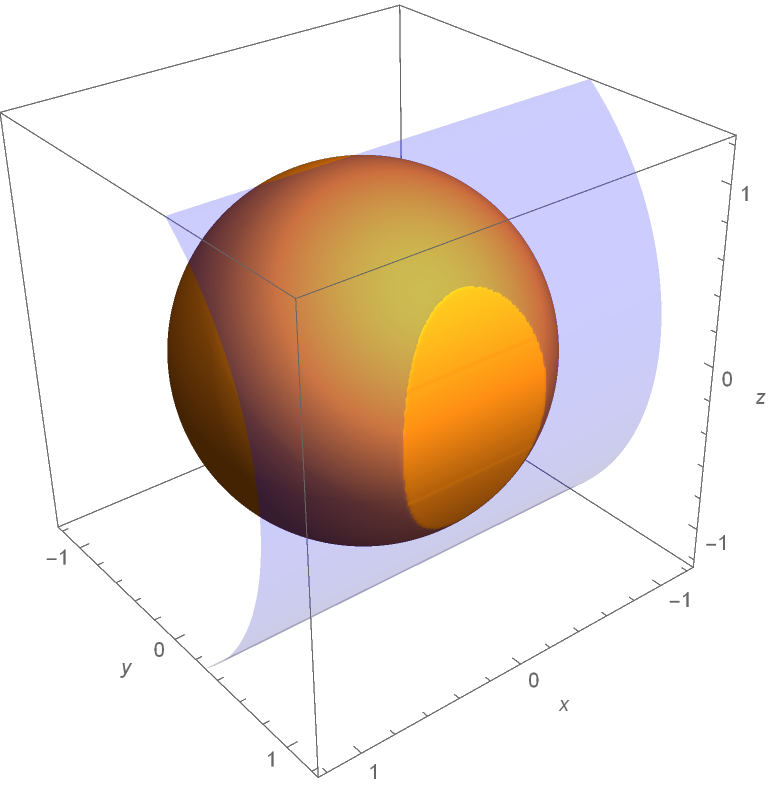}
    \quad 
    \includegraphics[width=0.3\textwidth]{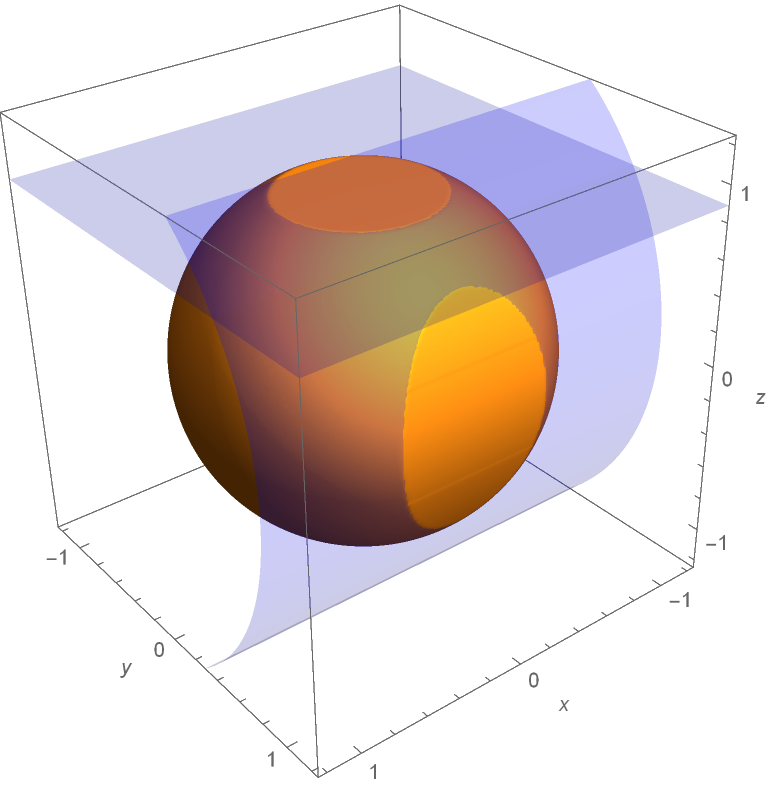}
    \caption{Construction of the set $S$ for when $I = \{0,2,3\}$, $\{0,1,3\}$ and $\{0,1,2,3\}$. Here $c = 1/\sqrt{2}$ and $r = 8/5$.}
    \label{fig:constIllustrate}
\end{figure}
Likewise, if we want to generate one-dimensional faces on our ball, instead of a half-space, we can intersect the ball with a cylinder, as shown in the middle image in Fig.~\ref{fig:constIllustrate}. Placing such objects strategically, we can generate faces of any dimension we like, as is shown in the rightmost image in Fig.~\ref{fig:constIllustrate} (also see Fig.~\ref{fig:faces} for a diagram which highlights the faces of this set that represent different dimensions).
\begin{figure}[ht]
    \centering
    \begin{overpic}[
    width=0.5\textwidth]{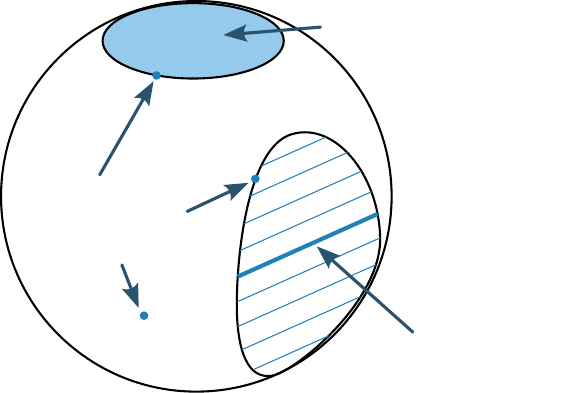}
    \put(60,64){face of dimension 2}
    \put(77,11){faces of}
    \put(73,6){dimension 1}
    \put(10,32){faces of}
    \put(6,27){dimension 0}
    \end{overpic}
    \caption{Faces of the set $S$ for $I = \{0,1,2,3\}$.}
    \label{fig:faces}
\end{figure} This construction can be  generalised to realise any subset of nonegative integers as a facial dimension signature of a convex set. 

Let $I$ be a finite set of nonnegative integers, and assume that $0 = \min I < \max I = n$. Let $c,r>0$ be such that 
\begin{equation}
    \label{eq:restrictcr}
1+c >r > \sqrt{c^2+ \sqrt{2} c +1}.
\end{equation}
Notice that $r>c$ and that such numbers exist, for instance, $c = 1/\sqrt{2}$ and $r = 8/5$. The allowable values of $r$ and $c$ are sketched in Fig.~\ref{fig:rc}.
\begin{figure}[ht]
    \centering
    \includegraphics[width=0.5\textwidth]{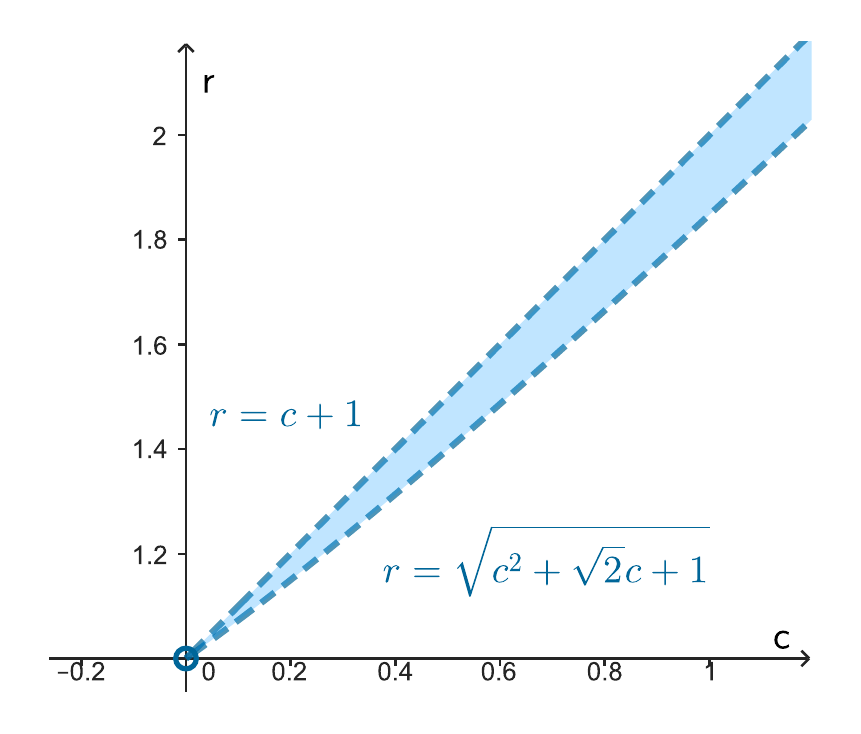}
    \caption{The region defined by the inequalities \eqref{eq:restrictcr}}
    \label{fig:rc}
\end{figure}

By $B^n$, denote the closed Euclidean ball of radius one in $\R^n$ centred at the origin. For any $i \in \{1,\dots, n-1\}$ by $C^n_i$ denote the following cylindrical subset of $\R^n$, 
\begin{equation}\label{eq:Cin}
C_i^n := \{x \in \R^n \, : \, (x_{i+1}+c)^2 + x_{i+2}^2+\cdots + x_n^2 \leq r^2\} =  \R^i \oplus (r B^{n-i} - c e_1),    
\end{equation}
where $e_1$ is the first standard basis vector (in $\R^{n-i}$).

Note that $C_i^n$ admits the following spectrahedral representation.
\[
C_i^n= \left\{ x\in \R^n \,\, : \,\, rI +\left(e_{i+1}e_{n+1}^{\top}+e_{n+1}e_{i+1}^{\top}\right)c + \sum_{\ell=i+1}^n \left(e_{\ell}e_{n+1}^{\top}+e_{n+1}e_{\ell}^{\top}\right) x_{\ell} \succeq 0\right\}.
\]
In the above, $I$ is the $(n+1)$-by-$(n+1)$ identity matrix and $e_i$ is the $i^{\textup{th}}$ standard basis vector (in $\R^{n+1}$).

Let $S_0:= B^n$, that is, $S_0$ is the unit Euclidean ball centred at zero. Then for any $i\in \{1,\dots, n-1\}$ we recursively define
\[
S_{i} := \begin{cases}
    S_{i-1} & \text{ if } i \notin I,\\
    S_{i-1}\cap C_i^n & \text{ if } i \in I.
\end{cases}
\]
We let $S = S_{n-1}$. This construction generates closed convex sets of the kind shown in Fig.~\ref{fig:constIllustrate}, and in Fig.~\ref{fig:slices} we show three-dimensional slices of the four-dimensional set $S$ that corresponds to $I = \{0,1,2,3,4\}$. 
\begin{figure}[ht]
    \centering
    \includegraphics[width=0.3\textwidth]{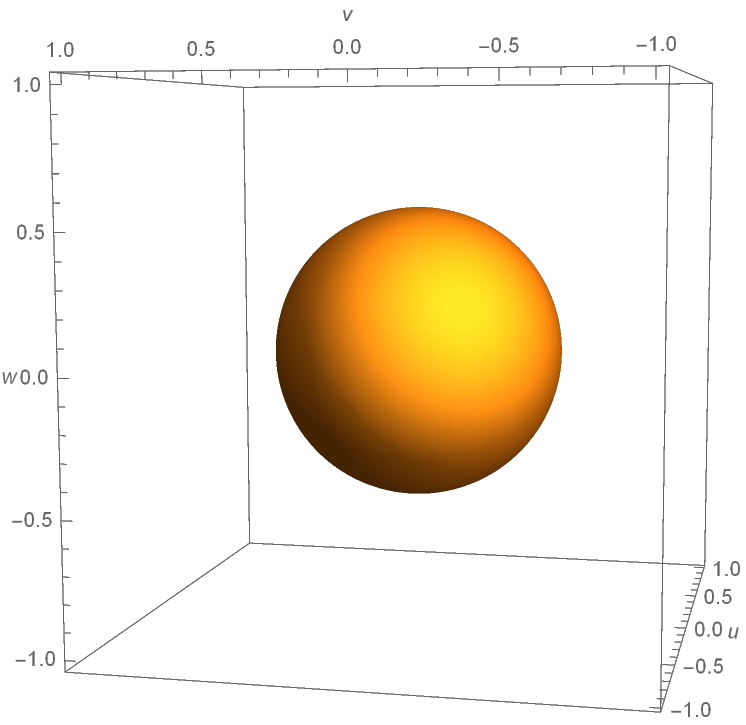}
    \quad 
    \includegraphics[width=0.3\textwidth]{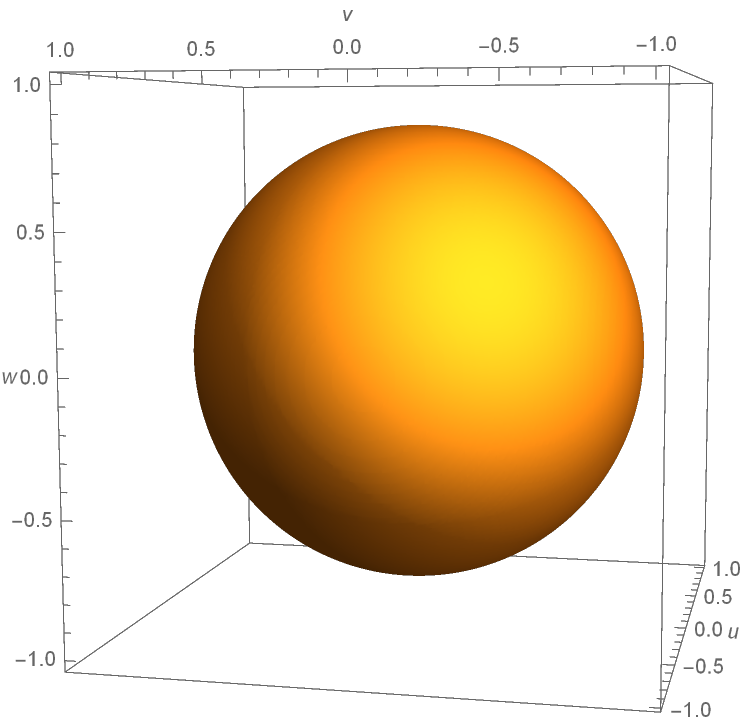}
    \quad 
    \includegraphics[width=0.3\textwidth]{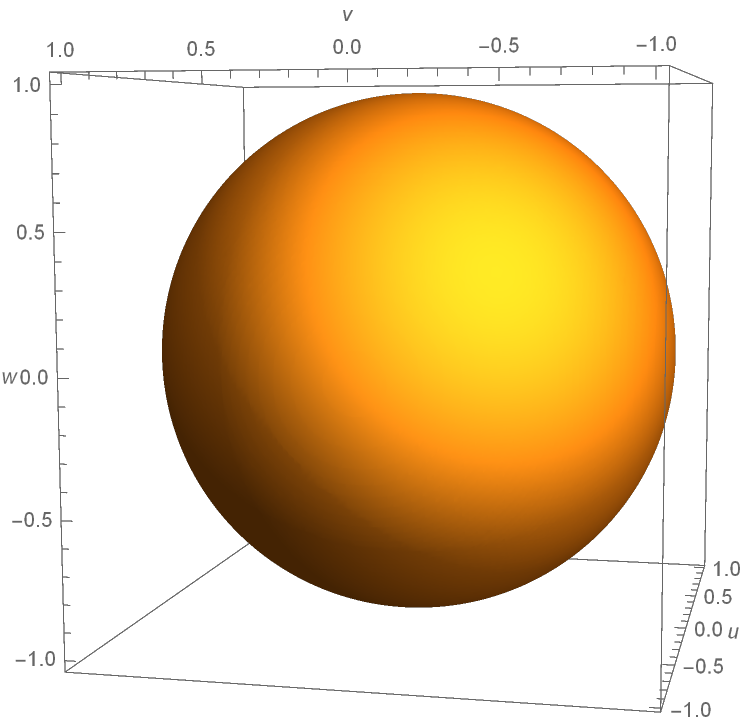}
    \quad 
    \includegraphics[width=0.3\textwidth]{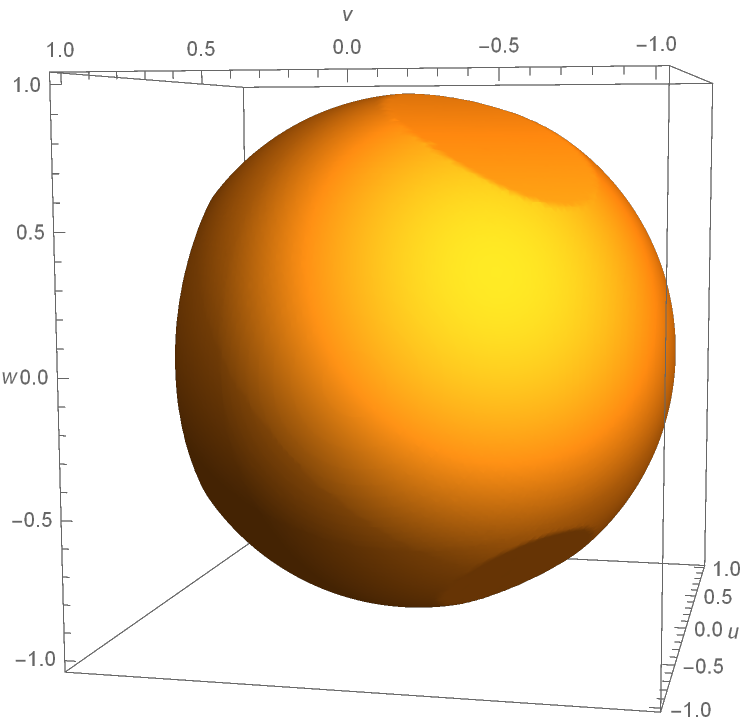}
    \quad 
    \includegraphics[width=0.3\textwidth]{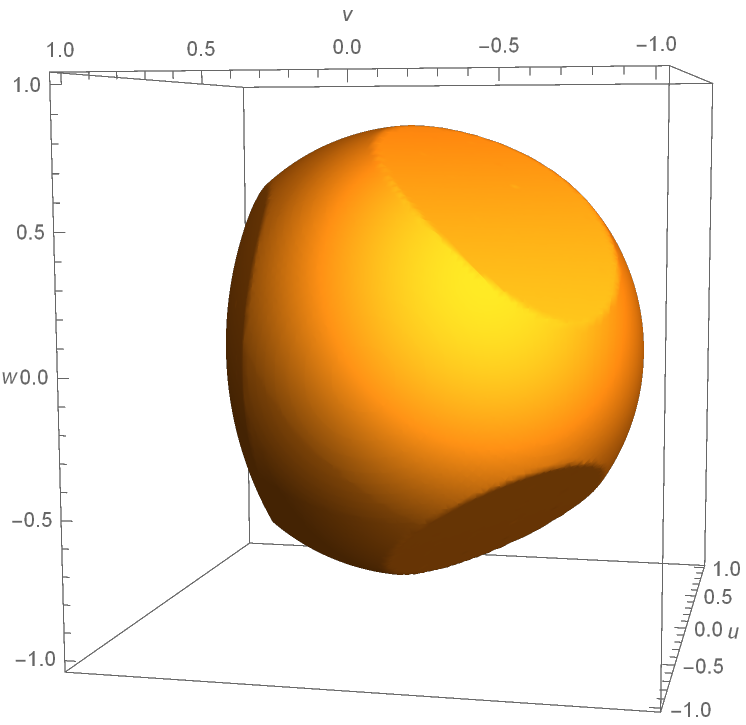}
    \quad 
    \includegraphics[width=0.3\textwidth]{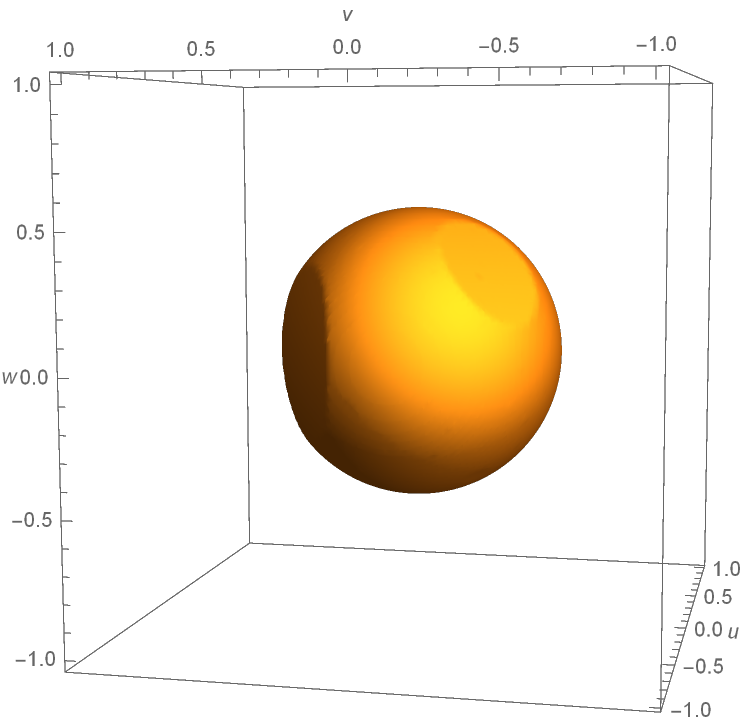}
    \quad 
    \caption{Slices of the set $S$ for $I = \{0,1,2,3,4\}$, by affine subspaces orthogonal to the vector $(1,1,1,1)$.}
    \label{fig:slices}
\end{figure}
In Section~\ref{sec:proof} we prove that this construction indeed realises any finite subset $I$ of nonnegative integers with $\min I =0$ as the facial dimension signature of a compact convex set. In the next section, we recall some technical tools and facts which will be used in the proofs.

\subsection{Essential facts from convex geometry}\label{sec:convgeom}

The dimension of a convex set $C\subseteq \R^n$ is the dimension of the smallest affine subspace which contains $C$. This smallest affine subspace is called the \emph{affine hull of} $C$. An affine subspace is a shifted linear space, $\{x\}+ L$, where $L$ is a linear subspace, and if for an affine subspace $A := \{x\} + L$, $C\subseteq A$, then we may assume that $x\in C$. Since intersection of any collection of linear subspaces is a linear subspace, the minimal affine subspace is well-defined (as the intersection of all affine subspaces containing $C$), along with its dimension.

The \emph{relative interior of $C$}, denoted by $\relint(C)$ is the interior of the set $C$ with respect to its affine hull. The relative interior of any nonempty convex set is also nonempty. 
Relative interior is closely related to the notion of \emph{minimal face}. For any $x\in C$ the \emph{minimal face of $C$ containing $x$}, $F_{\min}(x,C)$, can be defined as
\[
F_{\min}(x,C) := \bigcap_{\substack{F\unlhd C\\ x\in F} } F.
\]
Since intersections of faces are faces, $F_{\min}(x,C)$ is indeed the minimal face (by set inclusion) of $C$ that contains $x$. A basic fact about minimal faces is that $x\in \relint F$, where $F\unlhd C$ if and only if $F = F_{\min}(x,C)$ (see, for instance, Corollary~3.14 in \cite{DMR} for a proof). From this observation follows the classical decomposition of any closed convex set into the disjoint union of relative interiors of its faces (see \cite[Theorem~18.2]{Rockafellar}). The following result is Corollary~2.6 in \cite{DMR} and will be useful for our proofs. 

\begin{lemma}\label{lem:charminF} The minimal face $ F_{min}(x,C)$ for any $x\in C\subseteq\R^n$ can be represented as
\[
F_{\min}(x,C) = \bigcup \{[y, z] \subseteq C \,: \, x \in (y, z)\},
\]    
where we use the convention that $(x,x) = [x,x] =\{x\}$, that is, $(x,y)$ is the set of all strict convex combinations of $x$ and $y$.  
\end{lemma}

\begin{lemma}\label{lem:faces} Let $C,D\subseteq \R^n$ be convex. Then the nonempty faces of $C\cap D$ are exactly the nonempty intersections of faces of $C$ and $D$.     
\end{lemma}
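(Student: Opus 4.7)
The plan is to prove the two inclusions separately. The easier direction is that any nonempty intersection of a face of $C$ and a face of $D$ is a face of $C \cap D$: if $F_C \unlhd C$ and $F_D \unlhd D$ with $F_C \cap F_D \neq \emptyset$, then given $u, v \in C \cap D$ with $(u, v) \cap (F_C \cap F_D) \neq \emptyset$, I would pick a common point $z$ in the intersection and use the face property of $F_C$ in $C$ (via $z \in F_C \cap (u, v)$ and $u, v \in C$) to conclude $u, v \in F_C$, and symmetrically $u, v \in F_D$, so that $u, v \in F_C \cap F_D$.

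For the converse, let $F$ be a nonempty face of $C \cap D$; I would pick $x \in \relint F$, so that $F = F_{\min}(x, C \cap D)$, and aim to establish the identity
\begin{equation*}
F_{\min}(x, C \cap D) = F_{\min}(x, C) \cap F_{\min}(x, D),
\end{equation*}
which exhibits $F$ as an intersection of a face of $C$ with a face of $D$. The inclusion $\subseteq$ is immediate from Lemma~\ref{lem:charminF}: any segment $[y, z] \subseteq C \cap D$ with $x \in (y, z)$ is a fortiori a segment in $C$ and a segment in $D$ through $x$.

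The main obstacle is the reverse inclusion $F_{\min}(x, C) \cap F_{\min}(x, D) \subseteq F_{\min}(x, C \cap D)$. My plan is, given $w$ in the left-hand side (the case $w = x$ being trivial), to apply Lemma~\ref{lem:charminF} twice to obtain segments $[y_1, z_1] \subseteq C$ and $[y_2, z_2] \subseteq D$, each containing $w$ and with $x$ in the corresponding open segment. After truncating to the subsegment with $w$ as an endpoint, I may assume $z_1 = z_2 = w$, so that both $y_1$ and $y_2$ lie on the ray emanating from $x$ in direction $x - w$; parametrising $y_i = x + t_i(x - w)$ with $t_i > 0$ and setting $t := \min\{t_1, t_2\}$, $y := x + t(x - w)$, the point $y$ sits on $[x, y_1] \subseteq [w, y_1] \subseteq C$ and on $[x, y_2] \subseteq [w, y_2] \subseteq D$, hence $y \in C \cap D$. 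Convexity of $C \cap D$ then yields $[y, w] \subseteq C \cap D$ with $x \in (y, w)$, and Lemma~\ref{lem:charminF} places $w$ in $F_{\min}(x, C \cap D)$. I expect the principal subtlety of the proof to be precisely this step of manufacturing, from two separate segments through $x$ (one in $C$ and one in $D$), a single segment through $x$ lying entirely in the intersection; the choice of the shorter of the two parameters $t_1, t_2$ is what resolves it.
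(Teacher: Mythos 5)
Your proposal is correct and follows essentially the same route as the paper: the easy direction is verified directly from the definition of a face, and the harder direction reduces to showing $F_{\min}(x,C\cap D)=F_{\min}(x,C)\cap F_{\min}(x,D)$ for $x\in\relint F$ via Lemma~\ref{lem:charminF}. Your explicit truncation and choice of the shorter parameter $\min\{t_1,t_2\}$ is precisely the detail the paper compresses into ``it must be possible to extend the segment beyond $x$ within both $C$ and $D$.''
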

\begin{proof} Let $F$ be a nonempty face of $C\cap D$. There exists an $x\in \relint F$, and so $F = F_{\min}(x,C\cap D)$. It follows from Lemma~\ref{lem:charminF} that 
$F = F_{\min}(x,C\cap D)\subseteq F_{\min}(x,C)\cap F_{\min}(x, D)$. Now let $y\in F_{\min}(x,C)\cap F_{\min}(x, D)$. By the same result, it must be possible to extend the segment $[x,y]$  beyond $x$ within both $C$ and $D$, and hence there exists some $z\in C\cap D$ such that $x\in (y,z)$. This means that $y\in F$, utilising the same characterisation again. We conclude that $ F_{\min}(x,C)\cap F_{\min}(x, D)\subseteq F_{\min}(x,C\cap D)$, and hence we have represented $F$ as an intersection of faces of $C$ and $D$. 

Conversely, let $E\unlhd C$ and $F\unlhd D$. Then for any $y,z \in C\cap D$ such that there is an $x\in E\cap F \cap (y,z)$ we have $y,z \in E \cap F$, and hence $E\cap F$ is a face of $C\cap D$. 

\end{proof}

\begin{lemma}\label{lem:prodfaces} Suppose that $C\subseteq \R^k$ and $D\subseteq \R^m$ are convex sets. Then the nonempty faces of the direct sum of $C$ and $D$ are exactly the direct sums of nonempty faces of $C$ and $D$; that is,
\[
\{F\, : \, F\unlhd (C\oplus D) \}  = \{E\, :\, E\unlhd C\}  \oplus \{G\, :\, G \unlhd D\}.
\] 
\end{lemma}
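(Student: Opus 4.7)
The plan is to prove the two set inclusions separately, paralleling the proof of Lemma~\ref{lem:faces} and leveraging Lemma~\ref{lem:charminF} at the crucial step.

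For the inclusion ``$\supseteq$'' I would first verify that whenever $E\unlhd C$ and $G\unlhd D$ are nonempty, the direct sum $E\oplus G$ is a face of $C\oplus D$. Given any segment in $C\oplus D$ whose relative interior meets $E\oplus G$, projecting onto each factor yields segments in $C$ and $D$ whose relative interiors meet $E$ and $G$ respectively (at the same convex-combination parameter). The face properties of $E$ and $G$ then force the endpoints into $E$ and $G$, so the original endpoints lie in $E\oplus G$. This direction is routine.

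For the harder inclusion ``$\subseteq$'', the plan is to fix a nonempty face $F\unlhd(C\oplus D)$, pick $(x,y)\in\relint F$ so that $F=F_{\min}((x,y),C\oplus D)$, and prove the identity $F = F_{\min}(x,C)\oplus F_{\min}(y,D)$. The inclusion $F \subseteq F_{\min}(x,C)\oplus F_{\min}(y,D)$ is immediate, since the right-hand side is a face of $C\oplus D$ by the previous paragraph, it contains $(x,y)$, and $F$ is the smallest such face. The reverse inclusion is where I expect the main obstacle to lie: the extension segments supplied by Lemma~\ref{lem:charminF} in each factor come with independent convex-combination parameters, and these have to be reconciled to produce a single segment inside $C\oplus D$.

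To handle that reconciliation, given $u\in F_{\min}(x,C)$ and $v\in F_{\min}(y,D)$ I would use Lemma~\ref{lem:charminF} to obtain $p\in C$ with $x\in(u,p)$ and $q\in D$ with $y\in(v,q)$. Writing $x=\alpha u+(1-\alpha)p$ and $y=\beta v+(1-\beta)q$ with $\alpha,\beta\in(0,1)$, I would set $\gamma:=\min\{\alpha,\beta\}$ and replace $p$ by $\tilde p:=(x-\gamma u)/(1-\gamma)$; a short check shows $\tilde p$ is a convex combination of $u$ and $p$ and therefore lies in $C$, and an analogous $\tilde q\in D$ is defined the same way. Then $(x,y)=\gamma(u,v)+(1-\gamma)(\tilde p,\tilde q)$ exhibits $(x,y)$ in the open segment from $(u,v)$ to $(\tilde p,\tilde q)\in C\oplus D$, so Lemma~\ref{lem:charminF} applied in $C\oplus D$ places $(u,v)\in F$. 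This closes the remaining inclusion and identifies every nonempty face of $C\oplus D$ with a direct sum of nonempty faces of $C$ and $D$.
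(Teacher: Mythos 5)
Your proposal is correct and follows essentially the same route as the paper: the easy direction by projecting onto factors, and the converse by taking a relative-interior point of $F$, showing $F$ equals the direct sum of the two minimal faces, and reconciling the two extension parameters from Lemma~\ref{lem:charminF} by taking their minimum. The only difference is cosmetic (you work with convex-combination weights where the paper uses extension factors), so no further comment is needed.
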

\begin{proof}
Consider $E\unlhd C$ and $G\unlhd  D$, both nonempty.  We will show that $(E\oplus G) \unlhd (C\oplus D)$. 

Suppose that  $x = (x_1,x_2)$, $y= (y_1,y_2)$ and  $z=(z_1,z_2)$ are such that  
$x\in (E\oplus G)$, $y,z\in (C\oplus D)$, and $x\in (y,z)$. Then $x_1 \in (y_1,z_1)$ and $x_2 \in (y_2,z_2)$. Since $E$ is a face of $C$, and $y_1,z_1\in C$, we conclude that $y_1,z_1 \in E$. Likewise $y_2,z_2\in G$. We conclude that $y,z\in (E\oplus G)$, and so by the definition of a face $E\oplus G$ is a face of $C\oplus D$. 

Conversely, assume that $F$ is a nonempty face of $C\oplus D$. We will show that $F = E\oplus G$ for some faces $E\unlhd C$ and $G\unlhd D$. 

Since $F$ is nonempty, it has nonempty relative interior. Then there exists some  $x = (x_1,x_2) \in \relint F$. Let $E = F_{\min}(x_1,C)$ and $G = F_{\min}(x_2,D)$.    We know from the first part of the proof that $E\oplus G $ is a face of $C\oplus D$. Since $x\in (E\oplus G)$, and $F$ is the minimal face of $C \oplus D$ containing $x$, we must have $F\subseteq (E\oplus G)$. Now take any $y \in (E\oplus G)$. Since $E$ and $G$ are the minimal faces of $x_1$ in  $C$ and $x_2$ in $D$ respectively, by Lemma~\ref{lem:charminF} there must be some $\alpha_1,\alpha_2>0$ such that 
\[
x_1+ \alpha_1 (x_1-y_1) \in E, \quad x_2+ \alpha_2 (x_2-y_2) \in G.
\]
Let $\alpha  = \min \{\alpha_1,\alpha_2\}$. By convexity,
\[
z_1 := x_1 + \alpha (x_1-y_1) \in E, \quad z_2 := x_2 + \alpha (x_2-y_2) \in G.
\]
Then $z := (z_1,z_2)  = x+ \alpha (y-x)\in E\oplus G$, and using Lemma~\ref{lem:charminF} again we conclude that $y\in F$. We have shown that $F\subseteq (E\oplus G)$, which together with the converse inclusion demonstrated earlier proves that $F = (E\oplus G)$. 
\end{proof}

\begin{lemma}\label{lem:prodR} Suppose that $C\subseteq \R^m$ is a nonempty convex set. Then for any positive integer $k$
\[
\dim (C\oplus \R^k) = \dim C + k.
\]    
\end{lemma}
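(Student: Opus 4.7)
The plan is to reduce the dimension equality to a statement about affine hulls, which in turn follows from a routine manipulation of affine combinations. Specifically, I would show that
\[
\mathrm{aff}(C \oplus \R^k) \;=\; \mathrm{aff}(C) \oplus \R^k,
\]
and then read off the dimension from the right-hand side.

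First, I would establish the inclusion $\mathrm{aff}(C \oplus \R^k) \subseteq \mathrm{aff}(C) \oplus \R^k$ by noting that $C \oplus \R^k \subseteq \mathrm{aff}(C) \oplus \R^k$ and that the right-hand side is an affine subspace of $\R^{m+k}$ (being the direct sum of an affine subspace and a linear subspace), so it contains the affine hull of $C \oplus \R^k$. For the reverse inclusion, given any $(a, v) \in \mathrm{aff}(C) \oplus \R^k$, I write $a = \sum_{i} \lambda_i c_i$ as an affine combination of points $c_i \in C$ with $\sum_i \lambda_i = 1$; then
\[
(a, v) \;=\; \sum_i \lambda_i\, (c_i, v),
\]
which exhibits $(a, v)$ as an affine combination of points in $C \oplus \R^k$, since $\sum_i \lambda_i v = v$.

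Finally, to compute the dimension of $\mathrm{aff}(C) \oplus \R^k$, I would fix $x_0 \in C$ and write $\mathrm{aff}(C) = x_0 + L$ where $L$ is a linear subspace with $\dim L = \dim C$. Then
\[
\mathrm{aff}(C) \oplus \R^k \;=\; (x_0, 0) + (L \oplus \R^k),
\]
and since $L$ and $\R^k$ sit in orthogonal coordinate subspaces of $\R^{m+k}$, the linear subspace $L \oplus \R^k$ has dimension $\dim L + k = \dim C + k$, which is precisely the dimension of the affine hull in question.

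There is no serious obstacle here; the only thing to be careful about is the trivial but necessary observation that if $C$ is nonempty then so is $C \oplus \R^k$, so that its affine hull is well-defined, and that the use of a point $x_0 \in C$ as a basepoint for the affine hull is legitimate. The nonemptiness hypothesis in the statement is exactly what guarantees this.
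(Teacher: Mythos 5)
Your proof is correct and follows essentially the same route as the paper, which simply remarks that the affine subspaces containing $C\oplus\R^k$ are the direct sums of affine subspaces containing $C$ with $\R^k$; you have merely filled in the (routine) details of identifying $\mathrm{aff}(C\oplus\R^k)$ with $\mathrm{aff}(C)\oplus\R^k$ and reading off the dimension. No issues.
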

\begin{proof} Evident from observing that the affine subspaces containing $(C\oplus \R^k)$ are the direct sums of affine subspaces containing $C$ with $\R^k$, and from the fact that the dimension of a direct sum of two convex sets is the sum of the dimensions of the individual convex sets.
\end{proof}

\begin{lemma}\label{lem:shift} Let $C\subseteq \R^n$ be a convex set, and let $I$ be its facial dimension signature. Then, for every $k\in \Z_+$ the set
\[
C\oplus \R^k
\]
has the facial dimension signature
\[
I+\{k\} = \{i+k\, : \, i\in I\}.
\]
\end{lemma}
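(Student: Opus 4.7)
The plan is to combine Lemma~\ref{lem:prodfaces} and Lemma~\ref{lem:prodR} after first understanding the face structure of $\R^k$ itself. The key preliminary observation is that the only nonempty face of $\R^k$ is $\R^k$: indeed, if $F\unlhd \R^k$ is nonempty, pick $p\in F$ and any $q\in \R^k$; then $p\in (q,\, 2p-q)$ with both endpoints in $\R^k$, so the face property forces $q\in F$. Thus $F=\R^k$, a face of dimension $k$.

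With this in hand, I would apply Lemma~\ref{lem:prodfaces} to $C$ and $D:=\R^k$. The lemma says the collection of nonempty faces of $C\oplus \R^k$ is exactly
\[
\{E\oplus \R^k \,:\, E\unlhd C,\ E\neq \emptyset\},
\]
since the only nonempty face on the $\R^k$ side is $\R^k$ itself. Then Lemma~\ref{lem:prodR} gives $\dim(E\oplus \R^k)=\dim E + k$ for each nonempty face $E\unlhd C$. Taking the set of these dimensions as $E$ ranges over nonempty faces of $C$ yields
\[
\{\dim E + k \,:\, E\unlhd C,\ E\neq\emptyset\} \;=\; I+\{k\},
\]
which is the facial dimension signature of $C\oplus \R^k$.

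There is no real obstacle here; the proof is essentially bookkeeping once the two earlier lemmas are available, and the only small verification is the face-structure claim for $\R^k$, which is immediate. For the degenerate case $k=0$, the identity $C\oplus \R^0 = C$ and $I+\{0\}=I$ makes the statement trivial, so we may assume $k\ge 1$ if desired, but no separate treatment is actually needed.
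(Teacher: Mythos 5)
Your proof is correct and follows the same route as the paper: establish that $\R^k$ has no proper nonempty faces, then combine Lemma~\ref{lem:prodfaces} and Lemma~\ref{lem:prodR}. The only cosmetic difference is that you verify the face structure of $\R^k$ directly via the segment argument, while the paper simply notes that $\R^k$ equals its own interior and cites the ``faceless theorem.''
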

\begin{proof} 
    It is evident that the only nonempty face of $\R^k$ is $\R^k$ itself, since it coincides with its interior (this in particular follows from the `faceless theorem' \cite[Theorem~2.7]{Faceless}). By Lemma~\ref{lem:prodfaces}, the nonempty faces of $C\oplus \R^k$ are exactly 
    \[
    \{F \oplus \R^k \, : \, F\unlhd C, \; F\neq \emptyset\}.
    \]
    By Lemma~\ref{lem:prodR}, for any nonempty face $F$ of $C$ we have $\dim (F\oplus \R^k) = \dim(F) + k$. We conclude that the facial dimension signature of $C+\R^k$ is exactly $I+\{k\}$. 
\end{proof}

\begin{lemma}\label{lem:intintersection} Suppose that $x$ is in the relative interior of a face $F$ of some convex set $C$, and also that $x$ is in the interior of some other convex set $D$. Then the intersection $E := F\cap D$ is a face of $C\cap D$ and $\dim E = \dim F$. 
\end{lemma}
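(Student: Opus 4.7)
The plan is to combine Lemma~\ref{lem:faces} (which gives us the face structure immediately) with a direct dimension-counting argument based on the interior/relative-interior hypothesis.

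First, to see that $E = F \cap D$ is a face of $C \cap D$, I would invoke Lemma~\ref{lem:faces} with $D$ viewed as an (improper) face of itself. Since $F \unlhd C$ and $D \unlhd D$, the intersection $F \cap D$ is a face of $C \cap D$ by that lemma, provided it is nonempty; nonemptiness is immediate because $x \in F \cap D$ by assumption.

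Next, for $\dim E = \dim F$, my strategy is to show that $E$ contains a full $(\aff F)$-neighbourhood of $x$, which forces $\aff E = \aff F$. Concretely, since $x \in \interior D$, there exists $\epsilon > 0$ with $B(x,\epsilon) \subseteq D$. Since $x \in \relint F$, there exists $\delta > 0$ with $B(x,\delta) \cap \aff F \subseteq F$. Taking $\eta = \min\{\epsilon,\delta\}$ yields
\[
B(x,\eta) \cap \aff F \;\subseteq\; F \cap D \;=\; E.
\]
Thus $E$ contains a relative neighbourhood of $x$ in $\aff F$, so $\aff E \supseteq \aff F$. The reverse inclusion $\aff E \subseteq \aff F$ holds because $E \subseteq F$. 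Therefore $\aff E = \aff F$, and consequently $\dim E = \dim F$.

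There is no serious obstacle here: the face claim is a one-line consequence of Lemma~\ref{lem:faces}, and the dimension claim reduces to the elementary observation that relative-interior membership combined with an ambient interior condition preserves the affine hull under intersection. The only thing to be careful about is not to conflate the relative topology of $F$ with the ambient topology — one uses the ambient-interior hypothesis on $D$ precisely so that the ball $B(x,\epsilon)$ cuts $\aff F$ in a full relative neighbourhood of $x$.
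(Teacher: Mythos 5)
Your proposal is correct and follows essentially the same route as the paper: the face claim is obtained from Lemma~\ref{lem:faces} together with nonemptiness via $x \in F \cap D$, and the dimension claim is reduced to showing the affine hulls of $E$ and $F$ coincide. The only (cosmetic) difference is in the second step: you exhibit a relative neighbourhood of $x$ inside $E$ using both hypotheses on $x$ directly, whereas the paper argues by contradiction, shrinking the segment from $x$ to a hypothetical point $y \in F$ outside the affine hull of $E$ until it lands in $D$ (and hence in $E$); both arguments are valid.
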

\begin{proof}
    Since $D$ is its own face, and $x\in D\cap F$, the intersection $E = F\cap D$ is nonempty, and by Lemma~\ref{lem:faces} it is a face of $C\cap D$. It remains to show that $\dim E = \dim F$. Since $E\subseteq F$, we have $\dim E \leq \dim F$, and it remains to show the converse. To do so, it is sufficient to demonstrate that any affine subspace that contains $E$ also contains $F$. Suppose that $E\subseteq A = x+L$, where $L = \lspan \{l_1,\dots, l_k\}$, and assume  that there is some $y\in F\setminus A$. Since $x\in \interior C$, there must be a sufficiently small $\alpha\in (0,1)$ such that $y_\alpha = x+ \alpha (y-x)$ is in $C$. At the same time  by the convexity of $F$ we must have $y_\alpha \in F$, and therefore $y_\alpha \in E$. However, then $y_\alpha - x\in L$, and so $y-x\in L$, and we conclude that $y\in A$, a contradiction to our assumption. We conclude that the minimal affine subspace which contains $E$ must also contain $F$, and hence, $\dim E\geq \dim F$. 
\end{proof}

We will need a few technical statements about faces of sets $C_i^n$ and the Euclidean balls, before we can prove the main result.

\begin{lemma}\label{lem:balldimensions}
The nonempty faces of the Euclidean ball $B^n$ are the set $B^n$ itself, and all points on the boundary, each representing a face of dimension $0$. Hence, the facial dimension signature of $B^n$ is $\{0,n\}$.    
\end{lemma}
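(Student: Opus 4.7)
The plan is to use the characterization of the minimal face given by Lemma~\ref{lem:charminF}, combined with strict convexity of the Euclidean norm, to classify every nonempty face of $B^n$. Recall that every point of a closed convex set lies in the relative interior of a unique face, and that a face $F$ coincides with $F_{\min}(x,C)$ for any $x \in \relint F$. Hence, to identify all nonempty faces of $B^n$ it suffices to compute $F_{\min}(x, B^n)$ for each $x \in B^n$ and observe that exactly two behaviours occur.

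For an interior point $x \in \interior B^n$, I would argue that the minimal face is $B^n$ itself. Given any other point $y \in B^n$, a small neighbourhood of $x$ lies in $B^n$, so the segment from $y$ through $x$ can be extended slightly past $x$ to a point $z \in B^n$ with $x \in (y,z)$. Lemma~\ref{lem:charminF} then gives $y \in F_{\min}(x, B^n)$, and therefore $F_{\min}(x, B^n) = B^n$, which is a face of dimension $n$.

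For a boundary point $x$ with $\|x\| = 1$, I would invoke strict convexity of the function $u \mapsto \|u\|^2$: if $x = \lambda y + (1-\lambda) z$ with $y, z \in B^n$, $y \neq z$, and $\lambda \in (0,1)$, then
\[
\|x\|^2 < \lambda \|y\|^2 + (1-\lambda)\|z\|^2 \leq 1,
\]
contradicting $\|x\| = 1$. Thus no nondegenerate segment in $B^n$ contains $x$ in its relative interior, so Lemma~\ref{lem:charminF} forces $F_{\min}(x, B^n) = \{x\}$, a face of dimension $0$. Combining the two cases, the nonempty faces of $B^n$ are exactly $B^n$ itself and the singletons on its boundary, yielding the facial dimension signature $\{0, n\}$.

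The argument is routine and I do not anticipate any real obstacle; the only delicate point is invoking strict convexity of the squared norm, which is standard.
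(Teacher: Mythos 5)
Your proof is correct, and its overall skeleton matches the paper's: both reduce the problem to computing $F_{\min}(x,B^n)$ for every $x\in B^n$ (via the decomposition of a convex set into relative interiors of its faces) and split into the interior and boundary cases, with the interior case handled identically. The one genuine difference is the boundary case: the paper exhibits, for each boundary point $x$, the exposing half-space $\{u : \iprod{x}{u}\leq 1\}$ and concludes that $\{x\}$ is an exposed face, whereas you instead invoke strict convexity of $u\mapsto \|u\|^2$ to rule out any nondegenerate segment of $B^n$ having $x$ in its relative interior, and then apply Lemma~\ref{lem:charminF} to get $F_{\min}(x,B^n)=\{x\}$. Both arguments are sound; your route is marginally more self-contained (it uses only the minimal-face characterization already quoted in the paper), while the paper's route has the side benefit of producing the exposing hyperplane explicitly, which is the ingredient later needed for the facial-exposedness claim of Theorem~\ref{thm:everything}. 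The only point worth being pedantic about in your version is that when no nondegenerate segment through $x$ exists, the union in Lemma~\ref{lem:charminF} still contains $x$ itself (via the degenerate segment), so the conclusion $F_{\min}(x,B^n)=\{x\}$ rather than $\emptyset$ is the right one; this is a matter of convention and not a gap.
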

\begin{proof}
Since any convex set is represented as the disjoint union of relative interiors of its faces, to list all faces of a convex set it is sufficient to list all minimal faces of all points of this set.

If $x$ is an interior point of the Euclidean ball $B^n$, then the minimal face of $x$ is the ball itself. Since the ball has nonempty interior, its dimension is $n$. Every point $x$ on the boundary of $B^n$ is exposed by the half-space 
$\langle x,c\dot \rangle \leq 1$, hence every boundary point is a face of $B^n$, of dimension $\{0\}$. There are no other faces, since we have exhausted the points of $B^n$.  
\end{proof}

\begin{lemma}\label{lem:facedimensions} For any positive integer $n\geq 2$  and $i\in \{1,\dots, n-1\}$, the facial dimension signature of the set $C_{i}^n$ defined in \eqref{eq:Cin} is exactly $\{i, n\}$.  
\end{lemma}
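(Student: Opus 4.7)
The plan is to recognise $C_i^n$ as a direct sum decomposition and then reduce to the already-established signature of a Euclidean ball. Concretely, the equality
\[
C_i^n = \R^i \oplus (r B^{n-i} - c e_1)
\]
is stated explicitly in \eqref{eq:Cin}, so the problem splits into two independent tasks: identifying the facial dimension signature of the translated, rescaled ball $rB^{n-i} - c e_1$, and then reading off the signature of the direct sum with $\R^i$.

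For the first task, I would observe that the map $x \mapsto rx - c e_1$ is an invertible affine transformation of $\R^{n-i}$. Invertible affine transformations send faces to faces bijectively, preserve dimensions of convex sets, and preserve relative interiors, so the facial dimension signature is invariant under such transformations. Hence $rB^{n-i} - c e_1$ has the same facial dimension signature as $B^{n-i}$, which by Lemma~\ref{lem:balldimensions} is $\{0, n-i\}$. (Since $i \in \{1,\dots,n-1\}$, the dimension $n-i$ is at least $1$, so this set does have two distinct elements.)

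For the second task, I would apply Lemma~\ref{lem:shift} with $k = i$ to the set $rB^{n-i} - c e_1$. This yields that
\[
C_i^n = \R^i \oplus (r B^{n-i} - c e_1)
\]
has facial dimension signature
\[
\{0, n-i\} + \{i\} = \{i, n\},
\]
which is exactly the claim. (One should double-check the orientation of the direct sum convention used in Lemma~\ref{lem:shift}, which adds a copy of $\R^k$; since direct sum is commutative up to a coordinate permutation and permutations preserve the facial structure, this causes no trouble.)

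The only mild subtlety is the affine invariance of the facial dimension signature, which is not explicitly recorded as a lemma in the excerpt but follows directly from the definition of a face: if $T$ is an invertible affine map and $F \unlhd C$, then $T(F) \unlhd T(C)$, and $\dim T(F) = \dim F$. Once this observation is in hand, the remainder is a one-line application of Lemmas~\ref{lem:balldimensions} and \ref{lem:shift}, so I do not anticipate any real obstacle.
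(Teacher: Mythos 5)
Your proof is correct, but it takes a genuinely different route from the paper's. The paper argues directly: it runs through the points of $C_i^n$, notes that interior points have minimal face $C_i^n$ itself (dimension $n$), and for each boundary point $x$ explicitly constructs a supporting half-space whose intersection with $C_i^n$ is $\R^i\oplus\{\bar x\}$, an exposed face of dimension $i$. You instead exploit the decomposition $C_i^n=\R^i\oplus(rB^{n-i}-ce_1)$ already displayed in \eqref{eq:Cin}, reduce the second factor to $B^{n-i}$ by affine invariance, and invoke Lemma~\ref{lem:shift}. Both are sound; your version is shorter and more modular, reusing Lemmas~\ref{lem:balldimensions} and~\ref{lem:shift} rather than redoing a Cauchy--Schwarz-type computation, whereas the paper's version additionally exhibits the proper faces concretely and shows they are \emph{exposed}, which is extra information in the spirit of part (i) of Theorem~\ref{thm:everything}. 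Two small remarks: the affine-invariance fact you flag as "not recorded" is in fact stated as Lemma~\ref{lem:2.7}, though it appears \emph{after} the present lemma in the paper's ordering, so your decision to justify it from first principles (invertible affine maps carry faces to faces bijectively and preserve dimension) is the right call and avoids any appearance of a forward reference; and your handling of the order of the summands in $\R^i\oplus(rB^{n-i}-ce_1)$ versus $C\oplus\R^k$ in Lemma~\ref{lem:shift} via a coordinate permutation is exactly the care needed there. No gaps.
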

\begin{proof}  
This statement follows from the geometric characterization of $C_{i}^n$ given in \eqref{eq:Cin} and Lemmas~\ref{lem:shift} and \ref{lem:balldimensions}.

\end{proof}

\begin{lemma}\label{lem:dimintersections} Suppose that $C\subseteq \R^n$ is a full-dimensional convex set. Let $I$ denote the facial dimension signature of $C$.  Then the facial dimension signature of the intersection $B^n\cap C$ is a subset of $I\cup \{0\}$. 
\end{lemma}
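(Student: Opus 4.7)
The plan is to pick an arbitrary nonempty face $H\unlhd (B^n\cap C)$, choose a point $x\in\relint H$, and exploit the identity
\[
H \;=\; F_{\min}(x, B^n)\cap F_{\min}(x, C)
\]
that was established inside the proof of Lemma~\ref{lem:faces}. Going through the relative interior in this way avoids enumerating every pair $(F,G)$ of faces of $B^n$ and of $C$ whose intersection happens to be a face of $B^n\cap C$; such a direct enumeration would be delicate whenever a face of $C$ is tangent to $\bd B^n$, since then $\dim(G\cap B^n)$ drops well below $\dim G$ and one has to argue case by case.

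Set $G := F_{\min}(x, C)$, so that $\dim G\in I$ by hypothesis. By Lemma~\ref{lem:balldimensions}, the face $F_{\min}(x, B^n)$ is either $B^n$ itself (when $x\in\interior B^n$) or the singleton $\{x\}$ (when $x\in\bd B^n$). If $F_{\min}(x,B^n)=B^n$, then $H = B^n\cap G$, and since $x\in\interior B^n$ while $x\in\relint G$, Lemma~\ref{lem:intintersection} applied with the roles $F:=G$, $D:=B^n$ gives $\dim H = \dim G\in I$. If instead $F_{\min}(x,B^n)=\{x\}$, then $H = \{x\}\cap G = \{x\}$ and $\dim H = 0$. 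In either case $\dim H\in I\cup\{0\}$, and since $H$ was an arbitrary nonempty face, the facial dimension signature of $B^n\cap C$ is contained in $I\cup\{0\}$.

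The main conceptual point to get right is the order of choices: one must pick the relative interior point $x$ of the face $H$ \emph{first} and only then classify its position with respect to $B^n$. With this ordering, $x$ is automatically supplied both to the relative interior of the minimal face of $C$ containing it and to the interior of $B^n$ whenever the latter is relevant, which is exactly the hypothesis Lemma~\ref{lem:intintersection} demands. Full-dimensionality of $C$ is used only implicitly, to ensure that $n\in I$ and hence that the value $\dim H = n$ (arising when $x$ is interior to both sets) also fits into $I\cup\{0\}$.
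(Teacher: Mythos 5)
Your proof is correct, and it takes a genuinely cleaner route than the paper's. The paper argues in the other direction: it enumerates the faces of $C$, intersects each face $F$ with $B^n$, and splits on whether $F$ meets $\interior B^n$; in the tangential case ($\relint F \cap \interior B^n = \emptyset$) it needs a separation argument to show that a supporting hyperplane of $B^n$ pins $F \cap B^n$ down to a single extreme point. By instead fixing a nonempty face $H \unlhd (B^n \cap C)$, choosing $x \in \relint H$ first, and invoking the identity $H = F_{\min}(x,B^n) \cap F_{\min}(x,C)$ from the proof of Lemma~\ref{lem:faces}, you make the dichotomy depend on the position of $x$ rather than of a whole face: either $x \in \interior B^n$ and Lemma~\ref{lem:intintersection} preserves the dimension, or $x \in \bd B^n$ and $F_{\min}(x,B^n)=\{x\}$ forces $H=\{x\}$ outright. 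This eliminates the separation step and the delicate tangency case entirely, which is exactly the advantage you claim. One small inaccuracy in your closing remark: full-dimensionality of $C$ is not in fact needed anywhere in your argument, not even implicitly. In your first case $\dim H = \dim G$ for the face $G = F_{\min}(x,C)$ of $C$, and $\dim G \in I$ by the definition of the facial dimension signature regardless of whether $n \in I$; the value $n$ only arises as $\dim C$ itself, which is in $I$ automatically. So your proof actually establishes the lemma without the full-dimensionality hypothesis, which is a (harmless) strengthening rather than a gap.
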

\begin{proof}
 By Lemma~\ref{lem:faces} the nonempty faces of the intersection of two convex sets are exactly the intersections of faces of these convex sets. By Lemma~\ref{lem:balldimensions} the nonempty faces of the Euclidean ball are the ball itself and all points on the boundary (each representing an extreme point). Hence the faces of  $B^n\cap C$ are either extreme points or the intersections of the faces of $C$ with the ball $B^n$. If a face $F$ has an interior intersection with the ball, then by Lemma~\ref{lem:intintersection}  the dimension of $F\cap B^n$ is the same as the dimension of $F$. Otherwise, since the relative interiors of $F$ and $C$ do not intersect, $F$ can be separated from $B^n$ by a hyperplane. This hyperplane is supporting $B^n$, and as such its intersection with $B^n$ is a face. This cannot be $B^n$ itself, hence by Lemma~\ref{lem:balldimensions} this intersection is a single extreme point. We conclude that the intersection of $F$ with $B^n$ is a singleton, and hence this face has dimension $0$. 
\end{proof}

\begin{lemma}\label{lem:2.7} Facial dimension signatures are invariant under affine isomorphisms of the space.
\end{lemma}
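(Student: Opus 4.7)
The plan is to show that any affine isomorphism $T$ of the ambient space induces a dimension-preserving bijection between the nonempty faces of $C$ and those of $T(C)$, from which the equality of facial dimension signatures follows immediately. Writing $T(x) = Ax + b$ with $A$ invertible, the map $T$ is a bijection that sends line segments to line segments and, because it is bijective and affine, sends the relative interior of a segment onto the relative interior of its image; in particular, $x \in (y,z)$ if and only if $T(x) \in (T(y), T(z))$.

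First, I would verify that $F \unlhd C$ if and only if $T(F) \unlhd T(C)$. Assume $F\unlhd C$ and take any $y', z' \in T(C)$ with $(y',z') \cap T(F) \neq \emptyset$. Setting $y := T^{-1}(y')$ and $z := T^{-1}(z')$, the observation above gives $y, z \in C$ and $(y,z) \cap F \neq \emptyset$; the face property of $F$ forces $y, z \in F$, hence $y', z' \in T(F)$. The reverse implication is obtained by applying the same argument to the affine isomorphism $T^{-1}$.

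Next, I would verify that $\dim T(F) = \dim F$ using the fact that affine isomorphisms carry affine subspaces bijectively to affine subspaces of the same dimension. Concretely, if $A$ denotes the affine hull of $F$, then $T(A)$ is an affine subspace of the same dimension containing $T(F)$. Conversely, any affine subspace $A'$ containing $T(F)$ pulls back to the affine subspace $T^{-1}(A')$ containing $F$, hence $T^{-1}(A') \supseteq A$, i.e.\ $A' \supseteq T(A)$. Therefore $T(A)$ is the affine hull of $T(F)$ and $\dim T(F) = \dim T(A) = \dim A = \dim F$.

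Combining these two observations, $F \mapsto T(F)$ is a dimension-preserving bijection between the nonempty faces of $C$ and those of $T(C)$, so the two facial dimension signatures coincide. There is no substantial obstacle in the argument; it reduces to routine verifications from the definitions of face, affine hull, and affine isomorphism, with the only subtlety being the parallel use of $T$ and $T^{-1}$ to obtain both inclusions in each step.
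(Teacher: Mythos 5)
Your proof is correct and follows essentially the same route as the paper's, which also reduces the claim to the facts that affine isomorphisms preserve faces and dimensions of affine hulls. In fact, your write-up is more complete: the paper's proof only addresses the preservation of dimensions of affine hulls and leaves the verification that $F \unlhd C$ if and only if $T(F) \unlhd T(C)$ implicit, whereas you carry out that segment-based check explicitly.
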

\begin{proof}
   Let $C \subseteq \R^n$ be a convex set and $\phi: \R^n \to \R^n$ be an affine isomorphism. Then, there exist $A \in \R^{n \times n}$ nonsingular and $b \in \R^n$ such that $\phi(x) = Ax + b$. Thus, $\phi(C) = \{ Ax + b \,\, : \,\, x \in C \}$. Since the dimensions of affine subspaces are preserved under affine isomorphisms, dimensions of affine hulls of every subset of $C$, including dimensions of affine hulls of its faces are preserved. Therefore, the facial dimension signature of $\phi(C)$ is the same as that of $C$.
   \end{proof}

We denote by $\Sym^n$ the set of $n$-by-$n$ symmetric matrices, and by $\Sym^n_+$ the set of positive semidefinite matrices in $\Sym^n$.

\begin{lemma}\label{lem:quadraticdim} A nonempty convex set $C\subseteq \R^n$ defined by a convex quadratic inequality 
\[
C := \{x\in \R^n\, : \, f(x) \leq 0\}, 
\]
where $f :\R^n \to \R$ is a convex quadratic function is \begin{itemize}
    \item 
either an affine space and hence has the facial dimension signature $\{m\}$ for some $m \in \{0, 1, \ldots, n \}$, 
\item
or, it has the facial dimension signature $\{m,n\}$,
where $m\in \{0,\dots,n-1\}$.
\end{itemize} 
\end{lemma}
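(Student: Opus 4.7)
The plan is to reduce $f$ to a canonical form by an affine change of coordinates—admissible by Lemma~\ref{lem:2.7}—and then read off the signature directly using Lemmas~\ref{lem:balldimensions} and~\ref{lem:shift}.

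Writing $f(x)=\tfrac{1}{2} x^\top Q x + b^\top x + c$ with $Q\in\Sym_+^n$, I would decompose $\R^n=(\ker Q)\oplus(\ker Q)^\perp$, set $k:=\dim\ker Q$, and complete the square in the $(\ker Q)^\perp$-coordinates. This brings $f$ to the form $\tfrac{1}{2} y^\top Q_{22} y + \beta^\top z + \gamma$ with $Q_{22}\succ 0$, $z\in\ker Q$, $y\in(\ker Q)^\perp$ and $\beta\in\ker Q$; a further orthogonal rotation of $\ker Q$ aligns $\beta$ with a coordinate axis.

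The analysis splits on whether $\beta=0$. If $\beta\neq 0$, then $C$ is a direct sum $\R^{k-1}\oplus P$, where $P\subset\R^{n-k+1}$ is a closed paraboloidal region bounded by the graph of a strictly convex function. Because $Q_{22}\succ 0$, no nontrivial segment lies on this boundary, so by Lemma~\ref{lem:charminF} each boundary point's minimal face is itself, giving $P$ the signature $\{0,n-k+1\}$; Lemma~\ref{lem:shift} then yields $C$'s signature as $\{k-1,n\}$, with $m=k-1\in\{0,\dots,n-1\}$. If $\beta=0$, then $C=(\ker Q)\oplus E$ where $E=\{y\in(\ker Q)^\perp : y^\top Q_{22}y\le -2\gamma\}$. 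Nonemptiness forces $\gamma\le 0$; for $\gamma<0$ the set $E$ is an ellipsoid affinely isomorphic to $B^{n-k}$, so Lemmas~\ref{lem:balldimensions} and~\ref{lem:shift} give $C$ the signature $\{k,n\}$; for $\gamma=0$ and $k=0$, $E$ and hence $C$ is a singleton with signature $\{0\}$.

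The main obstacle I anticipate is the clean identification of the paraboloid's facial structure: specifically, verifying that no nontrivial segment lies on the boundary surface. This reduces, after expanding $(1-t)(s_0,y_0)+t(s_1,y_1)$, to the identity $(y_0-y_1)^\top Q_{22}(y_0-y_1)=0$, which by positive-definiteness of $Q_{22}$ forces $y_0=y_1$ and hence collapses the segment to a point. The remaining bookkeeping—tracking $k$ and the signs of $\beta$ and $\gamma$, and invoking the product and affine-invariance lemmas—is routine.
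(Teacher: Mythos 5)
Your proof follows essentially the same route as the paper's: reduce to a canonical form via an affine isomorphism (Lemma~\ref{lem:2.7}), split on whether the linear term survives on $\ker Q$, and invoke Lemmas~\ref{lem:balldimensions} and~\ref{lem:shift}; your $k=\dim\ker Q$ is the paper's $m$ and your two branches are exactly its $b'=0$ / $b'\neq 0$ cases. In fact you go further than the paper in the $\beta\neq 0$ branch, where the paper simply asserts the signature is $\{m-1,n\}$: your strict-convexity argument (no segment on the boundary of the paraboloidal piece, via $(y_0-y_1)^\top Q_{22}(y_0-y_1)=0$ and Lemma~\ref{lem:charminF}) is a correct and worthwhile justification of that claim. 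One degenerate sub-case is left out by both you and the paper: $\beta=0$, $\gamma=0$, $k\geq 1$ gives $C=\ker Q\oplus\{0\}$, a $k$-dimensional affine subspace whose signature is the singleton $\{k\}$ rather than $\{m,n\}$ or $\{0\}$ --- this is really a gap in the lemma's statement itself, but your write-up (which explicitly treats only $\gamma=0$, $k=0$) should acknowledge it.
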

\begin{proof} We may assume that $C$ is not empty and is not an affine space. Since $f$ is a convex quadratic function, there exist $A \in \Sym^n_+$, $a \in \R^n$ and $\alpha \in \R$ such that
$f(x) = \iprod{Ax}{x} + \iprod{a}{x} +\alpha$. Thus,
\[
C = \{x\in \R^n\, : \, \iprod{Ax}{x} + \iprod{a}{x} +\alpha\leq 0\}.
\]
Since $f$ is not a constant function (otherwise $C$ is an affine space), at least one of $A$, $a$ is nonzero. If $A$ is the zero matrix, then $a$ is not zero
and $C$ becomes a closed half-space with facial dimension signature $\{n-1,n\}$. Thus, we may assume, $A \neq 0$ for the rest of the proof. Let $m \in \{0,1, \ldots,n-1\}$ denote the dimension of the null space of $A$. Then, as we prove next, the facial dimension signature of $C$ is either $\{m,n\}$ or $m \geq 1$ and the facial dimension signature is $\{m-1,n\}$.

By Lemma~\ref{lem:2.7}, facial dimension signature is invariant under linear isomorphisms; therefore, the facial dimension signature of $C$ is the same as that of $C'$, where
\[
C' := \left\{x \in \R^n \, : \, \sum_{i=1}^{n-m} x_i^2 - \sum_{j=n-m+1}^{n-\ell} x_j + \alpha' \leq 0  \right\},
\]
for some $\ell \in \{0, 1, \ldots, m\}$ and $\alpha' \in \R$. (To construct a linear isomorphism which takes $C$ to $C'$, one can first apply a spectral decomposition to $A$ to construct a linear isomorphism which makes every degree two term equal to $x_i^2$ for some $i \in \{1,2, \ldots,n-m\}$. Then, for every $i$ for which the terms $x_i^2$ and $x_i$ are both present, one can express these terms as $(x_i-c_i)^2 + \bar{c}_i$ for some constants $c_i, \bar{c}_i \in \R$. Then, after a shift in $\R^n$ and a non-zero scaling of the variables, we obtain the form $C'$. Since each transformation above is a linear isomorphism of the space, so is their composition.)

Thus, we may assume that our set is $C'$.
If $m=0$, then the second sum is empty and since $C$ is not a singleton ($C$ is not an affine space), then $\alpha'$ is negative and $C'$ is an $n$-dimensional Euclidean ball with facial dimension signature $\{0,n\}$ by Lemma~\ref{lem:balldimensions}. Thus, we may further assume $m\geq 1$.

If $m \geq 1$ and the second sum in the definition of $C'$ is empty (i.e., $\ell =m$), then by Lemma~\ref{lem:shift} with $k:=n-m$ and Lemma~\ref{lem:balldimensions}, the facial dimension signature of $C'$ and also that of $C$ is $\{m,n\}$. Only remaining case is $m\geq 1$, $\ell \in \{0,1, \ldots,m-1\}$. In this case, the facial dimension signature of $C'$ as well as that of $C$ is $\{m-1,n\}.$ To see this, let $F$ be a proper nonempty face of $C'$. Further let $u \in F$ and $\lambda \in (0,1)$ such that for a distinct pair $x,y \in C'$, $u=\lambda x +(1-\lambda)y$. Then, using the facts that 
$x,y \in C'$, $\lambda \in (0,1)$, we obtain
\begin{eqnarray}
    \label{eqn:2.11.1}
    \lambda \sum_{i=1}^{n-m} x_i^2 
    +(1-\lambda) \sum_{i=1}^{n-m} y_i^2 +\alpha' &\leq &  \lambda \sum_{j=n-m+1}^{n-\ell} x_j + (1-\lambda) \sum_{j=n-m+1}^{n-\ell} y_j.
        \end{eqnarray}
By definition, the RHS is equal to $\sum_{j=n-m+1}^{n-\ell} u_j$. Since $u$ is on the boundary of $C'$, this is also equal to 
\[
\sum_{i=1}^{n-m} u_i^2 + \alpha'
=\lambda^2 \sum_{i=1}^{n-m} x_i^2 +(1-\lambda)^2 \sum_{i=1}^{n-m} y_i^2 +2\lambda(1-\lambda)\sum_{i=1}^{n-m} x_i y_i +\alpha'. \]
These last two equations we mentioned, together with the inequality \eqref{eqn:2.11.1}, and the fact that $\lambda \in (0,1)$ imply $x_i=y_i$ for every $i\in\{1,2, \ldots,n-m\}$; and, as a result, we also deduce
$\sum_{j=n-m+1}^{n-\ell} x_j =\sum_{j=n-m+1}^{n-\ell} y_j = \sum_{j=n-m+1}^{n-\ell} u_j$. Hence, $\dim(F) \leq m-1$. Noting that for a fixed $u$, every solution of the last linear system may be extended uniquely to an element of the face $F$ implies, $F$ contains an affine subspace of dimension $m-1$. Therefore, $\dim(F)=m-1$ and the facial dimension signature of $C'$ is $\{m-1,n\}$ as we had claimed.
\end{proof}

A careful reading of the above proof indicates that every facial dimension signature mentioned in the statement of
Lemma~\ref{lem:quadraticdim} can be realised by a suitable convex quadratic inequality. Therefore, Lemma~\ref{lem:quadraticdim} gives a complete characterization of facial dimension signatures of all nonempty convex sets which can be realised as the solution set of a single quadratic inequality.

\subsection{Proof of Theorem~\ref{thm:everything}}\label{sec:proof}

\begin{proof}
First observe that if $|I|=1$, then $S=\R^d$, where $\{d\}=I$ gives the required convex set, defined by an empty set of convex quadratic inequalities. In this special case when $I = \{0\}$, we have $S = \R^0 = \{0\}$, which is a compact set.

We now focus on the case when $|I|\geq 2$, and equivalently $\max I >\min I$, and we
will prove Theorem~\ref{thm:everything} for the case $0\in I$ first. Then we will explain how to modify this construction for a more general setting.

Given a finite set $I$ of nonnegative integers that contains zero, we use the construction described in Section~\ref{sec:construction} to obtain the set 
\begin{equation}\label{eq:sintersect}
S = B^n \cap \bigcap_{i\in I\setminus \{0,n\}} C^n_i,
\end{equation}
where $n := \max I$ and $C_i^n$ is defined by \eqref{eq:Cin}.
It follows from Lemma~\ref{lem:faces} that every face of the set $S$ is an intersection of faces of the sets that feature in the intersection \eqref{eq:sintersect}.

We first show that $B^n\cap \bd(C_j^n) \cap \bd(C_i^n) = \emptyset$ whenever $0<j<i<n$, which means that the only possible faces of $S$ are the intersections of faces of $B^n$ and $C_i^n$'s. By Lemma~\ref{lem:facedimensions} the facial dimension signature of $C_i^n$ is $\{i,n\}$ and the facial dimension signature of $B^n$ is $\{0,n\}$, hence by Lemma~\ref{lem:dimintersections} the facial dimension signature of $S$ is a subset of $I$. It will only remain to demonstrate that for any $d\in I$ there are indeed faces of dimension $d$ present in $S$. 

Let us show that indeed $B^n\cap \bd C_j^n \cap \bd C_i^n = \emptyset$, for $0<j<i<n$.  Assume the contrary, then there is some $x\in B^n\cap \bd C_j^n \cap \bd C_i^n$. Since $x\in \bd C_i^n$, we have 
\[
(x_{i+1}+c)^2 + x_{i+2}^2+\cdots + x_n^2 = r^2.
\] 
Rearranging, we have
\begin{align*}
2 x_{i+1} c &  = r^2 -  c^2 - (x^2_{i+1}+x^2_{i+2}+\cdots+x^2_{n})\\
& > (c^2+ \sqrt{2} c +1) - c^2 -1 = \sqrt{2} c,
\end{align*}
and hence $x_{i+1}>1/\sqrt{2}$. Likewise, $x_{j+1}>1/\sqrt{2}$, but then we have a contradiction
\[
1\geq \sum_{i=1}^n x_i^2 \geq x_{i+1}^2+ x_{j+1}^2 > 1.
\]

Finally, it remains to show that for any $d\in I$ there is a face of $S$ of dimension $d$. Notice that a Euclidean ball of radius $r-c$ centred at the origin is contained in $S$. Indeed, we have for any $x$ in such a ball
\[
1>(r-c)^2 \geq \sum_{i=1}^n x_i^2,
\]
hence $x\in B^n$. Also 
\[
(x_{i+1}+c)^2 + x_{i+2}^2+\cdots + x_n^2 \leq (r-c)^2 + c^2 + 2 x_{i+1} c = r^2 + 2 c( c-r  +x_{i+1}) \leq  r^2,
\]
where we used $|x_{i-1}|\leq \|x\|\leq |r-c| = r-c$.
Hence $x\in C_{i}^n$. We conclude that $S$ has a nonempty interior, and hence in itself comprises a face of dimension $n$. Since it is nonempty and compact, we also conclude that it has at least one extreme point, and so we have the zero covered as well. It remains to certify that for any other $d\in I$, $S$ has a face of this dimension. Let $d$ be one of such values, then the point $x$ with all coordinates zero except for $x_{i+1} = r-c$ is in the relative interior of such face. Indeed, since $r-c<1$, $x$ must be in the interior of $B^n$, and also 
\[
(x_{i+1}+c)^2 + x_{i+2}^2+\cdots + x_n^2  = r^2,
\]
hence $x\in \bd(C_{i}^n)$. We conclude that there must be a proper face of $C_i^n$ such that $x$ is in its relative interior, and hence its intersection with $B^n$ is a face of $S$ of the same dimension by Lemma~\ref{lem:intintersection}.

Now consider the general case when $m := \min I$ is not necessarily zero. Define the shifted set 
\[
I_0 := \{d-m\, : \, d\in I\}.
\]
We know that the set $S^0$ constructed for $I_0$ is in $\R^{n-m}$ (where $n = \max I$) and has faces of dimensions $I_0$. Let 
\[
S := S^0\oplus \R^m.
\]
This set has facial dimension signature $I$ by Lemma~\ref{lem:shift} and also has an explicit representation via $|I|-1$ quadratic inequalities,
\[
S  = \left\{x\in \R^n\, : \, 
\begin{array}{ll}
x_1^2+x_2^2+\cdots + x_{n-m}^2 \leq 1, & \\
(x_{i+1-m}+c)^2 + x_{i+2-m}^2+\cdots + x_{n-m}^2 = r^2 & \forall i \in I\setminus \{\min I,\max I\}
\end{array}
\right\}.
\]

It remains to remark that in the case when $0\in I$, the constructed set $S$ is compact: it is the intersection of closed sets one of which is compact (the ball $B^n$).

\end{proof}

\section{Efficient realisations}\label{sec:complexity}

\subsection{A better construction for complete sequences}\label{sec:optimalcomplete}
The following example shows that in the case of a complete sequence our construction can be at least exponentially worse than an optimal one. 
Let $K$ be a positive integer and consider a convex set in $\R^{2^{K}-1}$ defined by the convex quadratic inequalities
\[
x_{2^{k-1}}^2+\cdots + x_{2^k-1}^2 \leq 1, \,\,\,\, k \in \{1,\dots, K\}. 
\]
The first few inequalities are
\begin{align*}
    x_1^2& \leq 1,\\
    x_2^2+x_3^2& \leq 1,\\
    x_4^2+x_5^2+x_6^2+x_7^2& \leq 1.
\end{align*}
This set is the direct sum of convex sets defined by the solution set of each inequality. Faces of the direct sum is the direct sum of faces. Since these sets, considered as lower-dimensional sets that contribute to the direct sum, have faces of dimensions $\{0, 2^{k-1}\}$, we have to calculate 
\[
\sum_{k=1}^{K}\{0,2^{k-1}\}.
\]
We have 
\[
\{0,1\}+ \{0,2\} = \{0,1,2,3\},
\]
then 
\[
\{0,1,2,3\}+ \{0,4\} = \{0,1,2,3,4,5,6,7\},
\]
and assuming that 
\[
\sum_{k=1}^{K-1}\{0,2^{k-1}\} = \{0,1,\dots, 2^{K-1}-1\}
\]
we have 
\[
\sum_{k=1}^{K}\{0,2^{k-1}\} = \{0,1,\dots, 2^{K-1}-1\}+ \{0,2^{K-1}\} = \{0,1,\dots, 2^{K}-1\}.
\]
We have therefore constructed a convex set in the space of dimension $2^{K}-1$ with a complete dimensional sequence of faces $I = \{0,1,\dots,2^{K}-1\}$ using $K$ inequalities. We have
\[
n+1  = 2^{K},
\]
and so 
\[
K = \log_2 (n+1). 
\]

We will show in Corollary~\ref{cor:optimalcomplete} that this construction is optimal for the complete sequence, in terms of the minimum number of convex quadratic inequalities required to construct the set with the desired facial dimension signature. Together with our earlier construction, this method can also be used to represent other facial dimension signatures.

\subsection{Efficient representations and indecomposable sequences}\label{sec:integersequences}

In the previous section we used a specialised construction, where a direct sum of Euclidean balls realised a complete facial dimension signature with better complexity than an earlier construction.  A natural question is whether a similar construction can result in a more efficient realisation for other signatures. 

Generally speaking, given some convex sets $C_i\subseteq\R^{n_i}$ for $i\in \{1,\dots, m\}$, each with facial dimension signature $I_i$, using Lemma~\ref{lem:prodfaces} we conclude that the signature of the direct sum $C_1\oplus C_2\oplus \cdots \oplus C_m$ is the integer Minkowski sum 
\[
I = I_1 + I_2 + \cdots + I_m.
\]
Therefore, if a given signature is \emph{decomposable} as a sum of other integer subsets, we can use the construction of Section~\ref{sec:construction} to realise these subsets, and realise the full signature as the direct sum of these sets. Depending on the availability of decompositions, we can substantially reduce the complexity of the representations using this approach. To be more precise, a finite set of nonnegative integers $I$ is \emph{decomposable}, if there exists a pair of finite subsets $I_1$ and $I_2$ of nonnegative integers such that
\begin{itemize}
    \item 
    $I = I_1 + I_2$ and
    \item either both $I_1$ and $I_2$ have cardinality at least two,
    or one of them has cardinality at least two and the other is a singleton containing a positive integer (e.g., the set $\{0\}$ is not allowed in the decomposition).
\end{itemize}
We say that $I$ is \emph{indecomposable} if it is not decomposable.
Problems related to such decompositions of integer sequences have been studied in discrete mathematics in the research areas of additive number theory and integer sequences. For a deeper look into these areas, please see \cite{Gross2020}, \cite{Leonetti2023} and the references therein. 

\begin{theorem} If a signature $I$ can be represented as the sum $I  = I_1 + I_2 + \cdots + I_k$, where none of $I_i$ is $\{0\}$, then $I$ can be realised as the facial dimension signature of the solution set of a system of $|I_1|+|I_2|+\cdots +|I_k| - k$ convex quadratic inequalities. 
\end{theorem}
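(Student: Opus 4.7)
The plan is to realise $I$ via a direct sum construction and to simply count the inequalities. First, I would apply Theorem~\ref{thm:everything} to each of the summands $I_1,\ldots,I_k$ separately: this yields closed convex sets $S_j\subseteq \R^{n_j}$ with $n_j := \max I_j$ whose facial dimension signature is exactly $I_j$, each cut out by a system of $|I_j|-1$ convex quadratic inequalities. (When $|I_j|=1$ the system is empty, and $S_j=\R^{n_j}$; this still fits.)

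Next, I would form the direct sum $S := S_1 \oplus S_2 \oplus \cdots \oplus S_k \subseteq \R^{n_1+n_2+\cdots+n_k}$. A defining inequality for $S_j$ lifts to the ambient space $\R^{n_1+\cdots+n_k}$ by treating it as a convex quadratic inequality depending only on the coordinates in the $j$-th block; convex quadratic functions are closed under such a variable augmentation since extending by coordinates on which the function does not depend preserves positive semidefiniteness of the Hessian. Therefore $S$ is represented by a total of $\sum_{j=1}^{k}\bigl(|I_j|-1\bigr) = |I_1|+|I_2|+\cdots+|I_k|-k$ convex quadratic inequalities.

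It remains to verify that $S$ has facial dimension signature $I$. By iterating Lemma~\ref{lem:prodfaces}, the nonempty faces of $S$ are exactly the direct sums $F_1\oplus \cdots \oplus F_k$ where each $F_j$ is a nonempty face of $S_j$. By iterating Lemma~\ref{lem:prodR} (applied to the affine hulls, which are direct sums of the affine hulls of the $F_j$), the dimension of such a face equals $\dim F_1 + \cdots + \dim F_k$. Consequently the facial dimension signature of $S$ is the integer Minkowski sum $I_1+I_2+\cdots+I_k$, which by hypothesis equals $I$.

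There is no genuine obstacle here: once Theorem~\ref{thm:everything} and Lemma~\ref{lem:prodfaces} are in place, the argument is essentially bookkeeping. The only minor point to be careful about is the case where some $|I_j|=1$, which corresponds to no inequalities being contributed by that factor (matching the $-k$ correction in the count), and the observation that lifting quadratic inequalities to a larger ambient space by padding with ignored coordinates preserves convex quadraticity.
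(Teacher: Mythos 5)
Your proof is correct and follows essentially the same route as the paper: apply Theorem~\ref{thm:everything} to each summand $I_j$, take the direct sum of the resulting sets, invoke Lemma~\ref{lem:prodfaces} to identify the signature of the direct sum with the Minkowski sum $I_1+\cdots+I_k$, and count $\sum_j(|I_j|-1)$ inequalities. Your additional remarks on lifting each quadratic inequality to the ambient space and on the degenerate case $|I_j|=1$ are sound and only make explicit what the paper leaves implicit.
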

\begin{proof}
Suppose a facial dimension signature $I$ can be represented as $I  = I_1 + I_2 + \cdots + I_k$,
where each $I_i$ is a subset of nonnegative integers and none of them is $\{0\}$. Then, for each $I_i$ we apply Theorem~\ref{thm:everything} to obtain $C_i$ with facial dimension signature $I_i$ using at most $|I_i|-1$ convex quadratic inequalities. Thus, by Lemma~\ref{lem:prodfaces}, the direct sum $C:=C_1 \oplus C_2 \oplus \cdots \oplus C_k$ has the facial dimension signature $I_1 + I_2 + \cdots + I_k=I$. By construction, $C$ can be represented by using at most $\left(\sum_{i=1}^k |I_i|\right) - k$ convex quadratic inequalities as claimed.
\end{proof}

Notice that it is possible to make rational choices for both $r$ and $c$; that way, our construction can be rewritten as the solution set of a system of convex quadratic inequalities with integer coefficients.

Let us call a convex set $C \subseteq \R^n$ \emph{indecomposable} if there does not exist a linear isomorphism $A: \R^n \to \R^n$ and convex sets $C_1$, $C_2$, each with dimension at least one, such that $AC = C_1 \oplus C_2$ (otherwise, $C$ is \emph{decomposable}). By Lemma~\ref{lem:2.7}, the facial dimension signatures of $C$ and $AC$ are the same. Moreover, if $AC$ is decomposable then so is its facial dimension signature. Therefore, if $C$ is decomposable then so is its facial dimension signature. (Our results can also be adapted to convex cones.
Indecomposability of convex cones have been studied at least 50 years ago \cite{LoewySchneider1975}.) 

\subsection{Lower bounds}\label{sec:lowerbound}

The constructions discussed in the previous section can reduce the number of convex quadratic inequalities needed to represent a convex set with the desired facial dimensions. However, we do not know when this construction gives an optimal or near optimal representation. In this section, we prove a lower bound on the number of inequalities needed for a representation by a system of convex quadratic inequalities. Except for some special cases that include the complete facial dimension signature, it is an open question whether this lower bound is always achieved.

\begin{proof}[Proof of Theorem~\ref{thm:lowerbound}] Suppose that some convex set $C$ is represented as the intersection of $m$ convex quadratic inequalities. By Lemma~\ref{lem:quadraticdim} each quadratic inequality generates a convex set $C_i$ with a face of dimensions $n$ and proper faces of one fixed dimension $d_i\in \{0,\dots, n-1\}$, for $i \in \{1,\dots, m\}$. 

When we take the intersection of the solution sets of these inequalities we can only obtain faces that are the intersections of faces of these original convex quadratically defined sets, due to Lemma~\ref{lem:faces}. So every proper face of the intersection $C$ would be representable as the intersection of faces of $C_i$'s. 

Let $F$ be some face of $C$, and let $x\in \relint F$. Since $x\in C_i$ for all $i\in \{1,\dots, m\}$, we can consider $F_i = F_{\min}(x,C_i)$ for all $i$. By virtue of Lemma~\ref{lem:charminF} we have 
\[
F \subseteq \bigcap_{i\in \{1,\dots, m\}} F_i, 
\]
and since there are no smaller faces of $C_i$'s that would contain $F$, we conclude that the above inclusion is actually an equality. 

In a small neighbourhood of $x$ (that can be assumed to be located at the origin) the faces $F_i$, as well as the face $F$ coincide with linear subspaces whose dimensions are the dimensions of these faces. Hence the possible dimensions that $F$ can have are exactly the dimensions of the linear subspaces that can be obtained by intersecting $m$ linear subspaces of the dimensions of the faces $F_i$. 

Since every face is either a proper face or the set itself that has dimension $n$, the dimension of each intersection is determined by the dimension of the relative intersection of proper faces of $C_j$'s with $j\in J$, with $J\subseteq \{1,\dots, m\}$ (since faces of dimension $n$ do not affect the dimension of the intersection).

For any subset $J$ of $\{1,\dots, m\}$ the possible dimensions that we can obtain by intersecting \emph{all} of the proper faces of the sets $C_j$ with $j\in J$, are 
\[
\max\left\{0,\sum_{j\in J} d_j - (|J|-1) n \right\}  \leq d\leq \min_{j\in J} d_j,
\]
obtained from checking the possible dimensions of linear subspaces obtained by intersecting the linear subspaces of dimensions $d_j$ with $j\in J$. 

Therefore, the only possible dimensions of faces of $C$ are 
\[
I = \{n\}\cup \bigcup_{J\subseteq \{1,\dots,m\}} I_J,
\]
where 
\[
I_J := \left[\max\left\{0,\sum_{j\in J} d_j- (|J|-1) n \right\} ,\min_{j\in J} d_j\right].
\]

Now notice that if $J'\subseteq J\subseteq\{1,\dots,m\}$ and $\min_{j\in J} d_j = \min_{j\in J'} d_j$, then  
\[
\sum_{j\in J} d_j- (|J|-1) n = \sum_{j\in J'} d_j- (|J'|-1) n 
+ \sum_{j\in J\setminus J'} (d_j - n).
\]
Since the last term is negative, we conclude that $I_{J'}\subset I_{J}$, and hence if we make an assumption that the sequence $(d_i)_{i=1}^m$ is descending, then 
\[
I = \{n\}\cup \bigcup_{j=1}^{m} \left[\max\left\{0,\sum_{i=1}^j d_i- (j-1) n \right\} , d_j \right]
\]
is a set which contains all possible dimensions of faces of the set $C$. 
We conclude that if we want to represent a dimensional sequence via a convex set $C$ it cannot be represented more efficiently (in terms of the number of inequalities) than the minimum number needed for the representation \eqref{eq:intervals}.
\end{proof}

For a given nonnegative integer sequence $I$, and a given positive integer $k$, we can check whether $k$ is a lower bound given by Theorem~\ref{thm:lowerbound}, by solving an integer programming problem.

\begin{corollary}\label{cor:optimalcomplete}
If $I$ is a complete sequence, that is, $I = [\min I,\,\max I]\cap \Z_+$, then the minimum number of convex quadratic inequalities needed to represent this sequence as the facial dimension signature of some convex set is $\lceil\log_2(\max I-\min I +1)\rceil$.
\end{corollary}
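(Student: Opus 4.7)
The plan is to establish matching upper and lower bounds of $K := \lceil\log_2(\max I - \min I + 1)\rceil$ on the required number of convex quadratic inequalities. Throughout, write $n := \max I$, $m := \min I$, so $|I| = n - m + 1$.

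\textbf{Upper bound.} I would generalise the construction in Section~\ref{sec:optimalcomplete} to handle $|I|$ not being a power of two. Choose dimensions $s_i := 2^{i-1}$ for $i = 1, \ldots, K-1$ and $s_K := |I| - 2^{K-1}$; the relation $2^{K-1} < |I| \leq 2^K$ gives $1 \leq s_K \leq 2^{K-1}$. Place closed unit Euclidean balls of these dimensions on disjoint coordinate blocks, each cut out by one convex quadratic inequality. By Lemma~\ref{lem:prodfaces} combined with Lemma~\ref{lem:balldimensions}, the facial dimension signature of the resulting direct sum is the Minkowski sum $\sum_{i=1}^K \{0, s_i\}$. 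The induction in Section~\ref{sec:optimalcomplete} yields $\sum_{i=1}^{K-1} \{0, s_i\} = \{0, 1, \ldots, 2^{K-1}-1\}$; adjoining $\{0, s_K\}$ adds its translate by $s_K$, and the bound $s_K \leq 2^{K-1}$ forces the two blocks to overlap (or meet), producing exactly $\{0, 1, \ldots, |I|-1\}$. A final direct sum with $\R^m$ (Lemma~\ref{lem:shift}) shifts the signature up to $I$ without introducing any further inequality.

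\textbf{Lower bound.} I would invoke Theorem~\ref{thm:lowerbound} via the substitution $e_j := n - d_j$ and $\sigma_j := \sum_{i=1}^j e_i$. Since the sequence $(d_j)$ is non-increasing with $d_j \leq n-1$, the sequence $(e_j)$ is non-decreasing with $e_j \geq 1$, and the $j$-th interval in \eqref{eq:intervals} takes the form $[n - \sigma_j, n - e_j]$. The element $n - 1 \in I$ forces the top $n - e_1$ to be at least $n-1$, hence $e_1 = 1$ and $\sigma_1 = 1$. I would then prove by induction that $\sigma_j \leq 2^j - 1$ for every $j$ with $\sigma_{j-1} < |I| - 1$: in that case the integer $n - \sigma_{j-1} - 1$ belongs to $[m, n-1] \subseteq I$ but lies strictly below the bottoms $n - \sigma_i$ of all intervals $i \leq j-1$, so it must be covered by some interval indexed $j' \geq j$, which yields $e_{j'} \leq \sigma_{j-1} + 1$; monotonicity propagates this to $e_j \leq e_{j'} \leq \sigma_{j-1} + 1$, giving $\sigma_j \leq 2\sigma_{j-1} + 1$. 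Covering $m$ demands $\sigma_k \geq |I|-1$, and combining this with the doubling recurrence yields $2^k \geq |I|$, i.e.\ $k \geq K$.

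\textbf{Main obstacle.} The delicate step is the lower bound induction, specifically the use of monotonicity of $(e_j)$ to rule out a ``leapfrog'' covering in which some very small $e_{j'}$ with $j' \gg j$ plugs the gap left by the first $j-1$ intervals while $e_j$ stays large. The ordering $d_1 \geq \cdots \geq d_k$ built into Theorem~\ref{thm:lowerbound} is precisely what forecloses this scenario, and once that is observed the doubling recurrence $\sigma_j \leq 2\sigma_{j-1} + 1$ drops out cleanly. The upper bound, by contrast, is essentially a bookkeeping exercise; the only point requiring care is the choice of the final ball's ``leftover'' dimension $|I| - 2^{K-1}$ and the verification that the two blocks of the Minkowski sum overlap rather than leaving a gap.
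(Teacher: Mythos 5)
Your proposal is correct, and its overall strategy --- the direct sum of Euclidean balls for the upper bound, Theorem~\ref{thm:lowerbound} for the lower bound --- is the same as the paper's. The differences are in execution, and they are worth recording. For the lower bound, the paper works with the $d_j$ directly: it argues that one may normalise to the extremal sequence satisfying $d_{k+1}=d_1+\cdots+d_k-(k-1)n-1$ with $d_1=n-1$, solves the recursion to get $d_k=n-2^{k-1}$, and reads off $2^m\geq n+1-\min I$. Your substitution $e_j=n-d_j$, $\sigma_j=\sum_{i\le j}e_i$ turns the same covering conditions into the doubling inequality $\sigma_j\le 2\sigma_{j-1}+1$, proved by an induction that never needs the ``reduce to the extremal sequence'' step; the two recursions are algebraically identical, but your version is arguably tighter as a piece of logic, since it bounds every admissible sequence rather than arguing that a worst case may be assumed. (Your observation that the monotonicity $d_1\ge\cdots\ge d_k$ built into \eqref{eq:intervals} is what rules out a low-indexed gap being plugged by a high-indexed interval is exactly the point the paper compresses into the remark that the interval endpoints are monotone.) For the upper bound, the paper's Section~\ref{sec:optimalcomplete} only exhibits the construction for $I=\{0,1,\dots,2^K-1\}$, and its proof of the corollary addresses only the lower bound; your choice of a final ball of dimension $|I|-2^{K-1}$, the check that $1\le s_K\le 2^{K-1}$ makes the two blocks of the Minkowski sum meet, and the final shift by $\R^{\min I}$ via Lemma~\ref{lem:shift} supply the general-case matching construction that the paper leaves implicit. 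So your write-up is, if anything, more complete than the paper's on that side.
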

\begin{proof} We will use the bound of Theorem~\ref{thm:lowerbound} to prove this result. Let $n = \max I$.  The sequence of intervals that comprise the bound  monotone (for larger $j$ both lower and upper bounds are larger), and hence to ensure that we have no gaps in this representation, we have to have $d_1 = n-1$, for any $k\in \{1,\dots, m\}$
\[
d_1+d_2+\cdots + d_k - (k-1)n  \leq d_{k+1}+1,
\]
and
\[
d_1+d_2+\cdots + d_m - (m-1)n \leq  \min I. 
\]
Hence  
\[
d_{k+1}\geq d_1+d_2+\cdots + d_k - (k-1)n-1.
\]
Suppose that we have some sequence that already satisfies these inequalities. Then if there is some $k<n-1$ such that 
\[
d_{k+1}> d_1+d_2+\cdots + d_k - (k-1)n-1,
\]
we can reduce $d_{k+1},d_{k+2},\dots, d_m$ by one and still obtain a valid sequence (discarding any numbers that are smaller than $\min I$). Moreover, if there are any duplicate numbers $d_j=d_{j+1}$ in the sequence, we must have 
\[
d_{j+1} = d_j \geq  d_1 + d_2+\cdots + d_{j-1} - (j-2)n-1 > d_1+d_2+\cdots + d_{j} - (j-1)n-1, 
\]
hence the resulting reduced sequence must have no duplicates. 
We can therefore assume that for an optimal sequence of dimensions we must have 
\[
d_{k+1}= d_1+d_2+\cdots + d_k - (k-1)n-1 = \sum_{i=1}^k (d_i-n) + n-1 = d_k + (d_k-n) = 2 d_k-n.
\]
Since we must have $d_1 = n-1$, we then have 
\[
d_2 = 2 (n-1) -n = n-2, \; d_3 = 2 d_{m-2}-n = 2 (n-2)-n = n-4,
\]
and generally it is easy to see by induction that 
\[
d_k = n - 2^{k-1}.
\]
We must have for $d_m$ 
\[
d_1+d_2+\cdots + d_m - (m-1)n \leq  \min I,
\]
equivalently 
\[
\min I \geq n+ \sum_{i=1}^m (d_i-n) = n- \sum_{i=1}^m 2^{i-1} = n +1 -2^{m},
\]
so we must have 
\[
2^m \geq n+1-\min I,
\]
or 
\[
m \geq \log_2 (n+1-\min I).
\]
\end{proof}

\section{Conclusions and open problems}\label{sec:conclusions}
Our constructions are easily extended to the setting of convex cones.
We proved that our lower bound given in Theorem~\ref{thm:lowerbound} is tight (e.g., for complete sequences). However, we do not know in general, how loose the strongest of the bounds given in that theorem can be. We also do not know if considering more general objects than convex quadratic inequalities, for instance, general spectrahedra, spectrahedral shadows or hyperbolicity cones, may give us more efficient constructions (in some appropriate sense). We are also curious about the relation between the facial dimension signature of a convex set and its polar, and whether it is always possible to realise facial dimension signatures with convex sets that are facially exposed along with their polars. Next, we discuss these and related questions in more detail.

\renewcommand{\theenumi}{\arabic{enumi}} 

\subsection{Some indecomposable sequences}

\begin{enumerate}
    \item Can the lower bound given in Theorem~\ref{thm:lowerbound} be improved for some interesting family of signatures? In particular, consider the set of prime integers up to and including the $k$th prime, $I:=\{0, 2, 3, \ldots, p_k\}$ together with zero. The cardinality of $I$ is $k+1$ and $p_k \approx k\ln k$. This set of integers is indecomposable. Our construction leads to a representation of a convex set with this prime sequence as its facial dimension signature using $k$ convex quadratic inequalities. Is there a better construction with convex quadratic inequalities for this family of facial dimension signatures? 
    
    \item For each positive integer $n$, what is the minimum number of convex quadratic inequalities whose solution set has the facial dimension signature
    \[
\{0,1,4,9,16, \ldots, n^2\} ?
    \]
 This set of integers is also indecomposable and coincides with the facial dimension signature of the cone of $n$-by-$n$ Hermitian positive semidefinite matrices over the complex numbers.

\item For nonnegative integers $n$, let us define $t(n):= \frac{n(n+1)}{2}.$
Then the facial dimension signature of the cone of $n$-by-$n$ symmetric
positive semidefinite matrices is
\[
\left\{0, 1, 3, \ldots, t(n) \right\} = \left\{t(k) : k \in \{0, 1, \ldots, n\}\right\}. 
\]
This set of integers is also indecomposable. What is the minimum number of convex quadratic inequalities whose solution set has this facial dimension signature?

\end{enumerate}

\subsection{Representational questions}

For the efficiency of representations, so far in this paper, we only focused on efficiency in terms of the minimum number of convex quadratic inequalities that can represent a convex set with the given facial dimension signature. However, there are many other approaches to complexity and efficiency of representations that can be explored. Some of these related questions can be more important for many optimisation applications. 

For instance, we can ask what is the smallest spectrahedral representation of a closed convex set $C$ with the given dimensional signature, that is, what is the smallest $d$ such that 
\[
C = \left\{x \in\R^n \, :\, \sum_{i=1}^n A_n x_n + A_0 \succeq 0 \right\}, 
\] where $A_i \in \Sym^d$, for $i\in \{0, \dots, n\}$. 

Likewise, we can consider the smallest spectrahedral shadows and smallest hyperbolicity cone representations. Moreover, we can define the \emph{size} of the representation in different ways. For example, in the above we only mentioned $d$, the number of rows/columns of the matrices in the representation, one may consider demanding that all entries be rational, and then define the $\emph{size}$ also depending on the bit size of the data $A_0, A_1, \ldots, A_n$.

\begin{enumerate}

    \item What are the sizes of smallest spectrahedral representations for a given facial dimension signature?
    \item What are the sizes of optimal realisations by hyperbolicity cones?
    \item What are the sizes of optimal realisations by spectrahedral shadows? 
\end{enumerate}

\subsection{Dimensions of unions and measured facial dimension signatures}

We can consider more sophisticated facial signatures by defining a \emph{measured facial dimension signature} of a convex set $C$, as a pair of finite sets one of which is the facial dimension signature $I$, and the other is an ordered finite set $M$ of nonnegative real numbers where $|M| = |I|$ such that for each $i \in I$, the corresponding entry $M_i$ of $M$ is the Hausdorff dimension of the union of all faces of $C$ with dimension $i$. For example, for the convex sets in Fig.~\ref{fig:constIllustrate}, we have (respectively)
    \[
I= \{0,2,3\}, M=(2, 2, 3) \hspace{1cm} \mbox{ and }
\hspace{1cm} I= \{0,1,3\}, M=(2, 2, 3) \]
\[
\mbox{ and } \hspace{0.5cm} I= \{0,1,2,3\}, M=(2, 2, 2, 3).
\]
As a family of examples of arbitrarily large dimension, for any positive integer $n \geq 3$, consider the $n$-dimensional second-order cone. Its measured facial dimension signature is $I=\{0,1,n\}$, $M=(0, n-1, n)$. (The union of one-dimensional faces of this cone is equal to its boundary, and the Hausdorff dimension of the boundary is $(n-1)$.)
One can construct yet more complicated convex sets where the
Hausdorff dimension of the union of faces of certain dimension is not an integer. For some interesting examples, see \cite{RoshYost,FractalCurvature}.

We can then ask our realisability questions for given pairs $(I,M)$ or for given triples $(I,L,U)$ where $L$ and $U$ are ordered sets of nonnegative real numbers.
\begin{enumerate}
\item
Given a finite set of nonnegative integers $I$ what are the \emph{best} lower and upper bounds $L$ and $U$ such that for every convex set with facial dimension signature $I$, its measured facial dimension signature $(I,M)$ satisfies $L \leq M\leq U$? (Here, \emph{best} $L$ means, there does not exist $L'\geq L$ with $L' \neq L$ satisfying the same condition, analogously for $U$.)

More ambitiously, we may aim for sharper answers as follows.

 \item
Given a finite set of nonnegative integers $I$, characterize the set of $M$ such that there exists a convex set with measured facial dimension signature $(I,M)$.
\end{enumerate}

\subsection{Questions related to polars}\label{sec:polars}

For every set $C$ in $\R^n$, we define its \emph{polar} as
\[
C^{\circ} := \left\{s \in \R^n \, : \, \iprod{x}{s} \leq 1 \,\, \forall x \in C \right\}.
\]
Our first open question involving the polars is:

\begin{enumerate}

    \item Can we realise any  facial dimension signature $I$  with some facially exposed compact convex set such that its polar is also facially exposed?

\end{enumerate}

The above question does not have any restrictions on the facial dimension signature of the polar.
Next, suppose we are given two facial dimension signatures $I$ and $J$ such that $\min I = \min J = 0$ and $\max I = \max J$. In the two-dimensional case, there are only three possible pairs of signatures: $\{(0,2),(0,2)\}$, $\{(0,1,2),(0,1,2)\}$ and $\{(0,2),(0,1,2)\}$ (we can interchange the primal and the polar to obtain the permuted pair of signatures). The first two cases are realisable by the disk and with a triangle respectively, and for the last case we can consider the intersection of two regions defined by nondegenerate convex quadratic inequalities, as shown in Fig.~\ref{fig:pd2d}.
\begin{figure}[ht]
    \centering
    \begin{overpic}[
    width=0.4\textwidth]{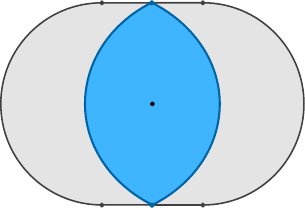}
    \put(53,28){$0$}
    \put(40,44){primal}
    \put(6,27){polar}
    \end{overpic}
    \caption{A pair of primal and polar compact convex sets that realise the pair of facial dimension signatures $(0,2)$ and $(0,1,2)$.}
    \label{fig:pd2d}
\end{figure}
One way to go about constructing primal and dual pairs of compact convex sets with desired pair of signatures $(I,J)$ is to arrange faces with signature $\{0,1,2\}\cap I$ smoothly on the boundary of a compact three-dimensional convex set, that way ensuring that the primal set has the signature $I$, while the polar has signature $\{0,3\}$. The same construction can be repeated locally on the polar, changing the facial dimension signature of the polar to $J$, but making no impact on the primal signature. This idea is illustrated in Figs.~\ref{fig:ppexample}, where we show an example of a primal compact convex set with signature $(0,1,3)$ and its polar with signature $(0,3)$. 
\begin{figure}[ht]
    \centering
    \includegraphics[height=0.22\linewidth]{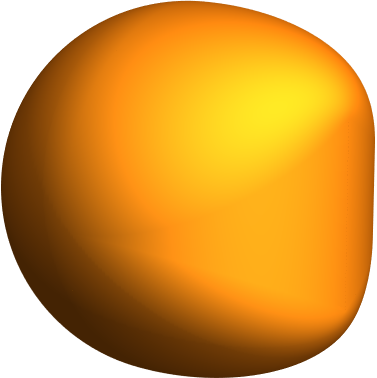}\;
    \includegraphics[height=0.22\linewidth]{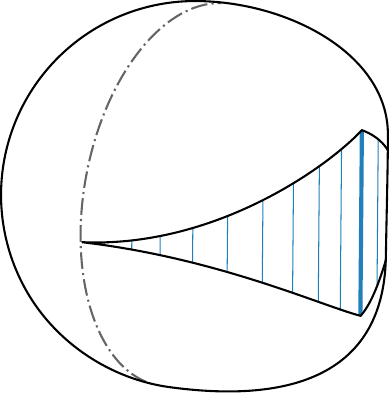}\qquad \quad
    \includegraphics[height=0.22\linewidth]{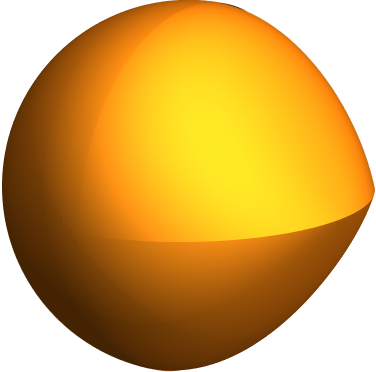}\;
        \includegraphics[height=0.22\linewidth]{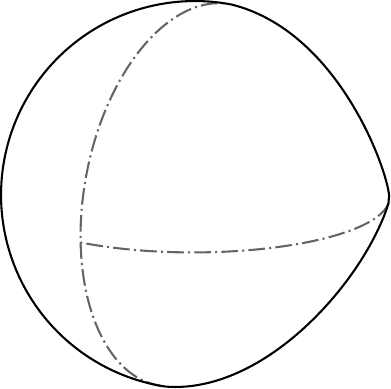}
    \caption{An example of a primal and polar pair of compact convex sets with facial dimension signatures $(0,1,3)$ and $(0,3)$ respectively (the visible vertical equator on the polar shown in the right-hand side is an artifact of Mathematica rendering: the surface is smooth along that circle).}
    \label{fig:ppexample}
\end{figure}

\begin{enumerate}

\setcounter{enumi}{1}

    \item Suppose that we are given two facial dimension sequences, $I$ and $J$ and for the sake of simplicity, suppose that $\min I = \min J = 0$ and $\max I = \max J$. Is it true that for any such pair $I,J$ there exists a compact convex set $C$ such that its facial dimension signature is $I$, and the facial dimension signature of its polar $C^\circ$ is $J$?

    It is clear from our earlier discussion in this subsection that in the case of $\R^3$ and in lower dimensional cases, all pairs of facial dimension signatures are possible. It seems conceivable that these ideas we outlined can be generalised to higher dimensions.

    \item In the previous subsections of this section, we posed our open problems over the primal set. For each of these questions, it would be interesting to investigate suitable primal-dual versions (in addition to the requirements on the primal set, requiring that the polar set also possess related, suitable desired properties).
    
\end{enumerate}

\bibliographystyle{plain}
\bibliography{refs}

\end{document}